\def\N{\mathbb N}
\def\Z{\mathbb Z}
\def\Q {\mathbb Q}
\def\F{\mathbb F}
\def\ord{\mathop{\rm ord}\nolimits}
\def\lll{l}
\theoremstyle{plain}
\newtheorem{theorem}{Theorem}[section]
\newtheorem{lemma}[theorem]{Lemma}
\newtheorem{definition}[theorem]{Definition}
\newtheorem{corollary}[theorem]{Corollary}
\newtheorem{remark}[theorem]{Remark}
\def\qed{\hfill\hbox{$\square$}}
\theoremstyle{definition}
\numberwithin{equation}{section}
\author[F.E. Brochero Mart\'{\i}nez]{F. E. Brochero Mart\'{\i}nez}
\author[L. Batista de Oliveira]{L. Batista de Oliveira}
\author[C. R. Giraldo Vergara]{C. R. Giraldo Vergara}
\address{
Departamento de Matem\'{a}tica\\
Universidade Federal de Minas Gerais\\
UFMG\\
Belo Horizonte, MG\\
 30123-970\\
 Brazil\\
 }
 \email{fbrocher@mat.ufmg.br }\email{L. Batista  de Oliveira (in memoriam)}
\email{carmita@mat.ufmg.br}
\subjclass[2010]{20C05 (primary) and 16S34(secondary)} 
\title[Wedderburn Decomposition and Idempotents \dots]{Wedderburn Decomposition and Idempotents of some finite metacyclic group algebras}
\keywords{Wedderburn decomposition, metacyclic group}
\begin{document}
\maketitle

\begin{abstract} 
In this article,  we  show  explicitly the  Wedderburn decomposition of the  metacyclic group algebra $\F_qG$, where $G$ has a cyclic subgroup of index 2 and  $\gcd(|G|,q)=1$.  We also construct  the complete set of central and left idempotents  of these group algebras.
\end{abstract}

\section{Introduction}

Let $\F_q$ be a finite field with $q$ elements, $G$ be a finite group with $n$ elements, where $\gcd (q,n )=1$,  and \(\F_qG\)  be the group algebra of \(G\) over  \(\F_q\).  Since $q$ and $n$ are coprimes, it follows from Maschke's Theorem  that  \(\F_qG\)   is semisimple and, as a consequence of the Wedderburn-Artin Theorem,   \(\F_qG\)  is isomorphic to a direct sum of matrix algebras over division rings. 
In addition, by Wedderburn's  Little Theorem, it is known that  finite  divisions rings are actually fields that are  in our case,  a finite extensions of $\F_q$, i.e.  there exists an isomorphism $\rho$ such that
$$\F_qG\mathop{\simeq}^\rho M_{l_1}(\F_{q^{m_1}})\oplus  M_{l_2}( \F_{q^{m_2}})\oplus\cdots\oplus M_{l_t} (\F_{q^{m_t}})$$
where $l_1,\dots l_t, m_1,\dots,m_t$ are appropriate positive integers  such that $\sum_{j=1}^t l_j^2m_j=|G|$.

 The explicit description of the primitive idempotents and Wedderburn decomposition of  \(\F_qG\)   is an important  problem in group algebras. In addition,  determining  ideals of \(\F_qG\) is important in coding theory, because these ideals  can be seen as subspaces of the vector field \(\F_q^n\)    that has  additional algebraic   properties. For instance,  irreducible cyclic codes are ideals of group algebra \(\F_q C_n\simeq \dfrac {\F_q[x]}{(x^n-1)}\)  generate by irreducible factors of \(x^n-1\).


Observe that $\F_qG$ has $t$  central irreducible idempotents, each one of the form 
$$e_i=\rho^{-1} (0\oplus\cdots\oplus 0\oplus I_i\oplus0\cdots\oplus 0),$$
where $I_i$ represents the identity matrix of the component $M_{l_i}(\F_{q^{m_i}})$.  Then, the isomorphism $\rho$ determines explicitly each central irreducible idempotent. 

In the case when we consider the field $\Q$ instead of $\F_q$, the calculus of  central idempotents and Wedderburn decomposition is widely studied;  the classical method to calculate  the primitive central idempotents of group algebras depends on   computing   the  character group table.  
Other method is shown in \cite{JLP}, where  Jespers, Leal and Paques describe the central irreducible idempotents, when $G$ is a nilpotent group, using the structure of its subgroups without 
employing the characters of the group. 
Generalizations and improvements of this method can be found in  \cite{ORS}, where the authors   provide information about the Wedderburn decomposition of $\Q G$. 
This computational method is also used in \cite{BrRi} to compute the Wedderburn decomposition and the central primitive
 idempotents of a  finite semisimple group algebra  $KG$, where $G$ is  an abelian-by-supersolvable group
$G$  and  $K$ is a finite field. 

The structure of $KG$ when $G=D_{2n}$ is the dihedral group with $2n$ elements is well known for $K=\Q$ (see \cite{BrGi2}) and for $K=\F_q$ where $\gcd(2n,q)=1$ (see \cite{Bro2}). 
In \cite{DFP}, Dutra, Ferraz and Polcino Milies impose conditions  over $q$ and $n$ in order for $\F_qD_{2n}$ to have the same  number of irreducible components as that of  $\Q D_{2n}$.  This result is generalized in \cite{FGPM}, where Ferraz, Goodaire and Polcino Milies find, for some families of groups,  conditions on $q$ and $G$ in order for $\F_qG$ to have the minimum number of simple components.

In this article, we  show  explicitly the  Wedderburn decomposition of the group algebra $\F_qG$ where $G$ has a  cyclic subgroup of index 2. We consider two possible group's families: The split metacyclic group that has a presentation of the form  $G = \langle{x,y \mid x^n= 1 = y^2 , xy =y  x^s \rangle}$ and the non-split metacyclic group that has a presentation of the form $G = \langle x,y \mid x^{2n}= 1, y^2 = x^n , xy = yx^s \rangle$.  In each case, we also find the complete set of central orthogonal idempotents and, in addition, we decompose  each central idempotent, when possible, in sum of non-central orthogonal idempotents.

\section{Preliminaries}
Throughout this article, $\F_q$ and $\overline\F_q$ denote a finite fields with $q$ elements and its  algebraic closure respectively,    $ord(a)$ is the order of $a$ in the cyclic group $\F_q^*$ and for each $c$ and $d$   positive integers  such that $\gcd(c,d)=1$, 
$ord_c(d)$ denotes the order of $d$ in the multiplicative group $\Z_c^*$.  For each $f(x)\in \F_q[x]$ such that $f(0)\ne 0$, $ord(f)$ is the less positive integer $n$ such that $f(x)$ divide $x^n-1$.  For each prime number $p$,  the function $\nu_p$ is the $p$-valuation, i.e. for each integer $c$, $\nu_p(c)$ is  the  highest exponent $v$ such that $p^v$ divides $c$.  In addition, for each  positive integer $k$, $\Phi_k(x)$ denotes the $k$-th cyclotomic polynomial.

The decomposition into simple components  of the group algebra  $\F_qG$,  when $G=C_n$ is a cyclic group with $n$ elements and $\gcd(n,q)=1$  is a well known result  and it can be seen  as direct consequence of the Chinese Remainder Theorem. Indeed 
\begin{equation}\F_qC_n\simeq \frac {\F_q[x]}{\langle x^n-1\rangle}\simeq \sum_{d|n} \frac {\F_q[x]}{\langle \Phi_d(x)\rangle}.\label{decom_ciclico}\end{equation}
In addition, since  $\Phi_d(x)$ splits into $\frac {\phi(d)}{\ord_d q}$ irreducible factors in $\F_q[x]$, each ring  $\frac {\F_q[x]}{\langle \Phi_d(x)\rangle}$ can be decompose  into $\frac {\phi(d)}{\ord_d q}$ copies of the field $\F_q(\xi_d)$, where $\xi_d$ is a root of $\Phi_d(x)$. In general, we have the following result for abelian group.


%

\begin{theorem}[Perlis-Walker's Theorem]\label{perliswalker}
Let  $G$ be an abelian group  with $n$ elements and $K$ be a field such that  $char(K) \nmid n$. Then 
$$ KG \cong \displaystyle\bigoplus_{d|n}a_dK(\xi_d),$$
where  $\xi_d$ is a $d$-th primitive root of unit,   $a_d = \frac{n_d}{[K(\xi_d):K]}$ and  $n_d$ is the number of elements of order $d$ in $G$.
\end{theorem}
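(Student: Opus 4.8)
The plan is to pass to a splitting field and descend via Galois theory. Since $\cha(K)\nmid n$, the polynomial $x^n-1$ is separable over $K$; fix a primitive $n$-th root of unity $\zeta$ in $\overline\F_q$ (resp.\ in an algebraic closure of $K$) and set $L=K(\zeta)$, a finite Galois extension of $K$ whose group $\Gamma=\mathrm{Gal}(L/K)$ embeds into $(\Z/n\Z)^*$ via $\sigma\mapsto t_\sigma$, where $\sigma(\zeta)=\zeta^{t_\sigma}$. Every homomorphism $\chi\colon G\to\overline\F_q^{\,*}$ takes values in the group $\mu_n$ of $n$-th roots of unity, which lies in $L$; hence, writing $\widehat G=\mathrm{Hom}(G,L^*)\cong G$ for the dual group, the elements $e_\chi=\frac1n\sum_{g\in G}\chi(g^{-1})g$, $\chi\in\widehat G$, are primitive orthogonal idempotents of $LG$ with $LGe_\chi\cong L$ and $LG=\bigoplus_{\chi\in\widehat G}LGe_\chi$, i.e.\ $LG$ is split semisimple.

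Next I would let $\Gamma$ act on $\widehat G$ by $\sigma\cdot\chi=\sigma\circ\chi$, equivalently $\sigma\cdot\chi=\chi^{t_\sigma}$. Since $LG=L\otimes_K KG$ with $\Gamma$ acting on the $L$-factor, Galois descent gives $(LG)^\Gamma=KG$; consequently the primitive idempotents of $KG$ are exactly the orbit sums $e_{\mathcal O}=\sum_{\chi\in\mathcal O}e_\chi$ over the $\Gamma$-orbits $\mathcal O\subseteq\widehat G$, and for each orbit the transitive action of $\Gamma$ on $\{e_\chi:\chi\in\mathcal O\}$ yields, by descent, $KGe_{\mathcal O}\cong(Le_\chi)^{\Gamma_\chi}\cong L^{\Gamma_\chi}$, where $\Gamma_\chi$ is the stabilizer of any $\chi\in\mathcal O$. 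Thus the decomposition of $KG$ reduces to a description of the $\Gamma$-orbits on $\widehat G$ and of the fixed fields $L^{\Gamma_\chi}$.

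For this last point, observe that a character $\chi$ of order $d$ in $\widehat G$ has image exactly $\chi(G)=\mu_d$, so $L^{\Gamma_\chi}=K(\chi(g):g\in G)=K(\xi_d)$, and $\Gamma_\chi=\{\sigma\in\Gamma:\,t_\sigma\equiv 1\pmod d\}$ is precisely the subgroup fixing a primitive $d$-th root of unity, hence of index $[K(\xi_d):K]$ in $\Gamma$ by the Galois theory of cyclotomic fields. Therefore every $\Gamma$-orbit of characters of order $d$ has size $[K(\xi_d):K]$, consists entirely of characters of order $d$, and contributes one simple component isomorphic to $K(\xi_d)$. Since $\widehat G\cong G$, the number of characters of order $d$ equals the number $n_d$ of elements of order $d$ in $G$, so there are $n_d/[K(\xi_d):K]$ such orbits; summing over the divisors $d$ of $n$ gives $KG\cong\bigoplus_{d\mid n}a_dK(\xi_d)$ with $a_d=n_d/[K(\xi_d):K]$.

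The main obstacle is the descent step of the second paragraph: checking carefully that the orbit sums $e_{\mathcal O}$ are primitive in $KG$ and that each simple component is precisely the fixed field $L^{\Gamma_\chi}\cong K(\xi_d)$ — a twisted form of the standard correspondence between simple components and Galois orbits. Once this is in place the remainder is elementary cyclotomic bookkeeping. An alternative, more hands-on route avoids descent: decompose $G\cong C_{n_1}\times\cdots\times C_{n_r}$, use $KG\cong KC_{n_1}\otimes_K\cdots\otimes_K KC_{n_r}$ together with \eqref{decom_ciclico} and the factorization of $\Phi_d$ over $K$, compute $K(\xi_a)\otimes_K K(\xi_b)$ from the factorization of $\Phi_b$ over $K(\xi_a)$ (obtaining copies of $K(\xi_{\lcm(a,b)})$), and match multiplicities by induction on $r$ via the identity $n_d(G\times H)=\sum_{\lcm(e,f)=d}n_e(G)\,n_f(H)$; there the obstacle is instead the tensor-product computation and the accompanying combinatorics.
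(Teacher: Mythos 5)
Your argument is correct, but it cannot be "the same as the paper's" because the paper offers no proof of this statement at all: Perlis--Walker is quoted there as a classical result, and the text only works out the special cyclic case $\F_qC_n$ via the Chinese Remainder Theorem in \eqref{decom_ciclico}. What you supply is the standard character-theoretic proof by extension of scalars and Galois descent: split $LG$ for $L=K(\zeta)$ into the lines $LGe_\chi\cong L$, let $\Gamma=\mathrm{Gal}(L/K)$ permute the characters through $\chi\mapsto\chi^{t_\sigma}$, identify the primitive idempotents of $KG$ with the orbit sums, and recognize each component as $K(\xi_d)$ with orbit length $[K(\xi_d):K]$; the final count $a_d=n_d/[K(\xi_d):K]$ then follows from $\widehat G\cong G$. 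The one step you flag as the main obstacle is indeed the only place needing care, and it closes routinely: for an orbit $\mathcal O$ through $\chi$ of order $d$, the projection $a\mapsto ae_\chi$ is an injective $K$-algebra map $KGe_{\mathcal O}\to Le_\chi$ (injective because the other components of $a\in KG$ are Galois conjugates of $ae_\chi$), its image lies in $L^{\Gamma_\chi}e_\chi$ since $\sigma(ae_\chi)=ae_\chi$ for $\sigma\in\Gamma_\chi$, and it is onto because the image contains $K$ and all values $\chi(g)$, hence $K(\xi_d)=L^{\Gamma_\chi}$; alternatively a dimension count over all orbits, $\sum_{\mathcal O}|\mathcal O|=n$, forces surjectivity. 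Two cosmetic points: the characters should take values in $\overline K^{\,*}$, not $\overline\F_q^{\,*}$ (the theorem is for an arbitrary $K$ with $\cha(K)\nmid n$), and one should note explicitly that $\mathrm{Hom}(G,L^*)\cong G$ because $L^*$ contains $\mu_{\exp(G)}$. Your alternative sketch (decomposing $G$ into cyclic factors, using \eqref{decom_ciclico} and computing $K(\xi_a)\otimes_K K(\xi_b)$) is the more elementary road and is closer in spirit to how the paper treats the cyclic case, at the price of the tensor-product and multiplicity bookkeeping you describe; the descent proof is shorter and explains structurally why the multiplicities are $n_d/[K(\xi_d):K]$.
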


We observe that the decomposition of $\F_qC_n$ depends fundamentally on the factorization of $x^n-1$ in $\F_q[x]$.  For the groups that we consider in this paper, that factorization will also be  essential as well as the $s$-self-involutive propriety. In fact,  for each polynomial $f(x)$, we construct  the so-called   $s$-involution  of $f(x)$, 
that  it is  one possible  generalization of  the notion of  reciprocal polynomial.

%
\begin{definition}Let $g(x) = (x-a_1)(x-a_2)\cdots (x-a_k)$ be a polynomial over $\F_q[x]$ and $s \in \Z$ such that $s^2\equiv1 \pmod {ord(g)}$. Let us denote $g^{*_s}(x)$,   the s-involution of $g(x)$, defined as  $g^{*_s}(x) := (x-a_1^s)(x-a_2^s)\cdots (x-a_k^s)$.   In the case when  $g$ and $g^{*_s}$ have the same roots in the decomposition field, the polynomial  $g$ is called $s$-self-involutive. 
The particular case when $s=-1$, the $s$-involution of $g(x)$  is the classical reciprocal of $g(x)$.
  \end{definition}


The following  result provides some properties on the $p$-adic valuation on numbers of the form $a^k-1$ with $a\equiv 1\pmod p$,  that we will using to determine the relation between  the finite fields $\F_q(\xi_d)$ and $\F_q(\xi_d+\xi_d^s, \xi_d^{s+1})$. This result is attributed to E. Lucas  and R. D. Carmichael  \cite{Car}.
\begin{lemma}\cite[Proposition~1]{B77} \label{LEL} Let $p$ be a prime and $\nu_p$ be the $p$-valuation. 
The following hold:
\begin{enumerate}[1)]
\item if  $p$ is an odd prime that divides $a-1$, then $\nu_{p}(a^{k}-1)=\nu_{p}(a-1)+\nu_{p}(k)$;
\item if $p=2$  and $a$ is odd number, then
$$
\nu_{2}(a^{k}-1)=
\begin{cases}
\nu_{2}(a-1)&\text{if $k$ is odd,} \\
\nu_{2}(a^{2}-1)+\nu_{2}(k)- 1&\text{if $k$ is even.}\\
\end{cases}
$$
\end{enumerate}
\end{lemma}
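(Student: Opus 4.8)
The plan is to reduce Lemma~\ref{LEL} to two elementary building blocks, a \emph{coprime case} and a \emph{single prime step}, and then to assemble the general statement by induction on $\nu_p(k)$ together with a telescoping factorization of $a^k-1$.

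First I would record the coprime case, valid for every prime $p$ dividing $a-1$: if $\gcd(k,p)=1$, then writing $a^k-1=(a-1)(a^{k-1}+\cdots+a+1)$ and reducing the second factor modulo $p$ (where $a\equiv 1$) shows it is congruent to $k$, hence a unit modulo $p$; therefore $\nu_p(a^k-1)=\nu_p(a-1)$. This already proves item (1) when $p\nmid k$ and item (2) when $k$ is odd (note that $a$ odd forces $2\mid a-1$, so the hypothesis $p\mid a-1$ is automatic there).

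Next, for $p$ an odd prime I would establish the single step $\nu_p(a^{p}-1)=\nu_p(a-1)+1$. Writing $a=1+p^{\alpha}u$ with $\alpha=\nu_p(a-1)\ge 1$ and $p\nmid u$, the binomial expansion of $a^{p}$ has linear term $p^{\alpha+1}u$ of valuation exactly $\alpha+1$, while every remaining term $\binom{p}{j}p^{j\alpha}u^{j}$ with $2\le j\le p$ has valuation at least $1+2\alpha$ (using $p\mid\binom{p}{j}$ for $1\le j\le p-1$) or $p\alpha$ (for $j=p$), both strictly greater than $\alpha+1$ since $\alpha\ge1$ and $p\ge3$. Iterating gives $\nu_p(a^{p^{\beta}}-1)=\alpha+\beta$ for all $\beta\ge0$; then for arbitrary $k=p^{\beta}m$ with $p\nmid m$, applying the coprime case to the base $a^{p^{\beta}}$ yields $\nu_p(a^k-1)=\nu_p(a^{p^{\beta}}-1)=\alpha+\beta=\nu_p(a-1)+\nu_p(k)$, which is item (1).

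Finally, for $p=2$ and $k=2^{\beta}m$ with $m$ odd and $\beta\ge1$, I would use the factorization
\[
a^k-1=(a^m-1)(a^m+1)\prod_{i=1}^{\beta-1}\bigl(a^{2^{i}m}+1\bigr).
\]
A parity count (the cofactors $a^{m-1}+\cdots+1$ and $a^{m-1}-a^{m-2}+\cdots+1$ are each a sum of an odd number of odd terms) yields $\nu_2(a^m\mp1)=\nu_2(a\mp1)$, while for $i\ge1$ the exponent $2^{i}m$ is even, so $a^{2^{i}m}$ is the square of an odd number, hence $\equiv1\pmod 8$ and $\nu_2(a^{2^{i}m}+1)=1$. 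Summing the valuations gives $\nu_2(a^k-1)=\nu_2(a-1)+\nu_2(a+1)+(\beta-1)=\nu_2(a^2-1)+\nu_2(k)-1$, which is item (2). The whole argument is elementary; the only place requiring care is this last $2$-adic bookkeeping, where one must observe that, $a$ being odd, exactly one of $\nu_2(a-1)$ and $\nu_2(a+1)$ equals $1$ while the other may be larger, and that this surplus is precisely what the term $\nu_2(a^2-1)$ absorbs. I do not expect any genuine obstacle.
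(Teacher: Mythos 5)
Your proof is correct. Note, however, that the paper itself gives no proof of this lemma: it is quoted verbatim from Beyl \cite[Proposition~1]{B77} (and attributed historically to Lucas and Carmichael), so there is no argument in the paper to compare against. What you have written is the standard self-contained ``lifting the exponent'' argument: the coprime reduction via the geometric-sum cofactor, the single-step computation $\nu_p(a^p-1)=\nu_p(a-1)+1$ for odd $p$ by binomial expansion, and, for $p=2$, the telescoping factorization $a^{2^{\beta}m}-1=(a^m-1)(a^m+1)\prod_{i=1}^{\beta-1}(a^{2^{i}m}+1)$ together with the observation that odd squares are $1 \bmod 8$. All the valuation estimates check out (in particular $1+2\alpha>\alpha+1$ and $p\alpha>\alpha+1$ for $\alpha\ge 1$, $p\ge 3$, and the $2$-adic bookkeeping $\nu_2(a-1)+\nu_2(a+1)=\nu_2(a^2-1)$), so your argument would serve as a legitimate replacement for the external citation, at the cost of a paragraph of elementary computation that the authors chose to outsource.
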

The following lemma shows properties of some extensions field, that 
will be useful in one of  principal theorem of this paper.

\begin{lemma}\label{lema1}
Let $f(x) \in \F_q[x]$ be  a monic irreducible $s$-self-involutive factor   of $x^n-1$, where $s^2\equiv 1\pmod n$ and $\alpha$ be a root of $f(x)$, then
$$ [ \F_q(\alpha) : \F_q(\alpha +\alpha^s, \alpha^{s+1}) ] = 2. $$
\end{lemma}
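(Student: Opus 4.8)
The key observation is that $\F_q(\alpha+\alpha^s,\alpha^{s+1})$ is the fixed field of a natural order-$2$ subgroup of $\mathrm{Gal}(\F_q(\alpha)/\F_q)$, so the plan is to exhibit that automorphism explicitly and show its fixed field is exactly $\F_q(\alpha+\alpha^s,\alpha^{s+1})$.

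Let me set up. Since $f$ is an irreducible factor of $x^n-1$, its root $\alpha$ is a primitive $d$-th root of unity for some $d\mid n$, and $\F_q(\alpha)=\F_{q^m}$ where $m=\ord_d(q)$; the Galois group $\mathrm{Gal}(\F_q(\alpha)/\F_q)$ is cyclic of order $m$, generated by the Frobenius $\sigma\colon z\mapsto z^q$, and it acts on the roots of $f$ (all primitive $d$-th roots of unity lying in the same $q$-orbit as $\alpha$) via $\sigma^i(\alpha)=\alpha^{q^i}$.

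First I would consider the map $\tau\colon z\mapsto z^s$ on $\F_q(\alpha)$. Since $s^2\equiv 1\pmod d$ (as $d\mid n$ and $s^2\equiv1\pmod n$), and since $\gcd(s,d)=1$, raising to the $s$-th power permutes the $d$-th roots of unity and is an involution on them. The crucial point is that $\tau$ must coincide with some power $\sigma^{j}$ of Frobenius on $\alpha$: indeed, the $s$-self-involutive hypothesis says $f^{*_s}=f$, i.e. $\alpha^s$ is again a root of $f$, hence $\alpha^s=\alpha^{q^j}$ for some $j$, so $\tau=\sigma^j$ as field automorphisms of $\F_q(\alpha)$ (they agree on the generator $\alpha$). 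Thus $\tau\in\mathrm{Gal}(\F_q(\alpha)/\F_q)$, and $\tau^2=\mathrm{id}$ on $\alpha$, hence $\tau^2=\mathrm{id}$. Now I must rule out $\tau=\mathrm{id}$: if $\alpha^s=\alpha$ then $s\equiv 1\pmod d$, and I claim this forces $\alpha+\alpha^s=2\alpha$ and $\alpha^{s+1}=\alpha^2$, so $\F_q(\alpha+\alpha^s,\alpha^{s+1})=\F_q(\alpha)$ when $q$ is odd — which would make the index $1$, not $2$. Here is where I expect the main obstacle: the lemma is stated unconditionally, so either the intended reading excludes the degenerate case $s\equiv1\pmod d$ (which makes $f$ linear or makes the reciprocal trivial), or there is an implicit hypothesis that $\alpha\ne\alpha^s$; I would either add the hypothesis $s\not\equiv1\pmod{d}$, or argue that when $f$ is nonlinear and $s$-self-involutive with $\F_q(\alpha)$ strictly larger than the subfield in question, necessarily $\tau\ne\mathrm{id}$. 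Assuming $\tau\ne\mathrm{id}$, $\langle\tau\rangle$ is a subgroup of order $2$ of $\mathrm{Gal}(\F_q(\alpha)/\F_q)$, which requires $m$ even, and its fixed field $\F_q(\alpha)^{\langle\tau\rangle}$ has index $2$ in $\F_q(\alpha)$ by the fundamental theorem of Galois theory.

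It then remains to identify $\F_q(\alpha)^{\langle\tau\rangle}$ with $\F_q(\alpha+\alpha^s,\alpha^{s+1})$. The inclusion $\subseteq$: I would show $\F_q(\alpha)^{\langle\tau\rangle}$ is generated over $\F_q$ by $\tau$-invariant elements built from $\alpha$, and among the most natural are the elementary symmetric functions of $\{\alpha,\alpha^s\}$, namely $\alpha+\alpha^s$ and $\alpha\cdot\alpha^s=\alpha^{s+1}$ — these are manifestly fixed by $\tau$, so $\F_q(\alpha+\alpha^s,\alpha^{s+1})\subseteq\F_q(\alpha)^{\langle\tau\rangle}$. The reverse inclusion (equivalently, that the index $[\F_q(\alpha):\F_q(\alpha+\alpha^s,\alpha^{s+1})]$ is \emph{at most} $2$): $\alpha$ is a root of the quadratic $T^2-(\alpha+\alpha^s)T+\alpha^{s+1}\in\F_q(\alpha+\alpha^s,\alpha^{s+1})[T]$, so $[\F_q(\alpha):\F_q(\alpha+\alpha^s,\alpha^{s+1})]\le 2$. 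Combining the chain $\F_q(\alpha+\alpha^s,\alpha^{s+1})\subseteq\F_q(\alpha)^{\langle\tau\rangle}\subseteq\F_q(\alpha)$ with $[\F_q(\alpha):\F_q(\alpha)^{\langle\tau\rangle}]=2$ and $[\F_q(\alpha):\F_q(\alpha+\alpha^s,\alpha^{s+1})]\le 2$ forces $\F_q(\alpha+\alpha^s,\alpha^{s+1})=\F_q(\alpha)^{\langle\tau\rangle}$ and the index is exactly $2$, as claimed.

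Summarizing the order of steps: (1) reduce to $d$, identify $\F_q(\alpha)$ and its Galois group; (2) show the $s$-power map $\tau$ is a Galois automorphism of order $2$ (using $s$-self-involutivity for membership and $s^2\equiv1$ for the order), handling/excluding the degenerate case $s\equiv1$; (3) check $\alpha+\alpha^s$ and $\alpha^{s+1}$ are $\tau$-fixed, giving one inclusion; (4) use the explicit quadratic satisfied by $\alpha$ to bound the degree by $2$; (5) conclude equality of fields and index $2$. The genuinely delicate point is step (2) — pinning down exactly when $\tau$ fails to be trivial, since the statement as written has no side condition; everything else is routine Galois theory.
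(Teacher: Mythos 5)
Your proof is correct and follows essentially the same route as the paper: both arguments show that $\alpha+\alpha^s$ and $\alpha^{s+1}$ are fixed by the Frobenius power sending $\alpha\mapsto\alpha^s$ (hence lie in a proper subfield of $\F_q(\alpha)$) and then use the quadratic $T^2-(\alpha+\alpha^s)T+\alpha^{s+1}$ to cap the degree at $2$; your Galois fixed-field phrasing merely repackages the paper's containment $\F_q(\alpha+\alpha^s,\alpha^{s+1})\subseteq\F_{q^{\gcd(u,m)}}\subsetneq\F_{q^m}$. The degenerate case $\alpha^s=\alpha$ that you flag is indeed an implicit hypothesis: the paper's proof silently excludes it by taking $1\le u\le m-1$, and in its applications the lemma is only invoked for factors $f_i\nmid x^d-1$ with $d=\gcd(n,s-1)$, where $\alpha^s\ne\alpha$ automatically holds.
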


\begin{proof}
Since $f$ is an irreducible polynomial, then $f \in \F_q[x]$ is the minimal  polynomial of  $\alpha$ over $\F_q$. Therefore $\alpha  \in \F_{q^m}$, where $m$ is the degree of $f$ and $m$  is minimal  with this property.
In addition, from the fact that $f$ is $s$-self-involutive, it follows that  $\alpha^s$ is  also a root of  $f(x)$.
Thus  $\alpha$ and $\alpha^s$ are conjugated  and  there exists $ 1 \leq u \leq m-1$ such that $\alpha^s = \alpha^{q^u}$, or equivalently,  $s \equiv q^u \pmod {ord(\alpha)}$. 
Now, we observe that 
$$ (\alpha + \alpha^s) ^{q^u} = \alpha^{q^u} + (\alpha^{q^u})^s = \alpha^s + \alpha^{s^2} = \alpha^s +\alpha,$$
that implied that  $ \alpha^s +\alpha \in \F_{q^u} $. 
The same way
$$ (\alpha^{s+1})^{q^u} =\alpha^{s^2+s}= \alpha^{s+1}$$
and hence   $\alpha^{s+1} \in \F_{q^u} .$
It follows that
$$ \F_q(\alpha +\alpha^s,\alpha^{s+1}) \subseteq \F_{q^u} \cap \F_{q^m} = \F_{q^{\gcd(u,m)}} \subsetneq \F_{q^m} = \F_q(\alpha),$$
in particular we have that $[ \F_q(\alpha) : \F_q(\alpha +\alpha^s, \alpha^{s+1}) ] \ge 2$. 
On the other hand,  $\alpha$ is root of the polynomial $x^2 - (\alpha+\alpha^s)x + \alpha^{s+1}$, and this polynomial has its coefficients in  $\F_q(\alpha +\alpha^s,\alpha^{s+1}) $, therefore
$[ \F_q(\alpha) : \F_q(\alpha +\alpha^s, \alpha^{s+1}) ] \le 2$.
From these inequalities  we conclude that $[ \F_q(\alpha) : \F_q(\alpha +\alpha^s, \alpha^{s+1}) ] = 2$.\qed
\end{proof}

The following lemma let us understand how the towel of fields of the form $\F_q(\alpha^{2^k})$ grown, when we change the values of $k$. 

\begin{lemma}\label{Lema da extensao}
Let $\alpha\in \overline \F_q$ be a $2n$-th primitive root of the unit, where $n$ is even and  $q$ be a power of a prime such that  $q \equiv 3 \pmod 4$.  Then

\begin{enumerate}[(i)]
\item $ [ \F_q(\alpha^{2^{\nu_2(n)-1}}) : \F_q(\alpha^{2^{\nu_2(n)}})] = 2$  
and 
$ [ \F_q(\alpha^{2^{\nu_2(n)}}) : \F_q(\alpha^{2^{\nu_2(n)+1}})] = 1$.
\item If $\nu_2(n)\leq \nu_2(q+1),$ then
 $$ [ \F_q(\alpha^{2^{j}}) : \F_q(\alpha^{2^{j+1}})] = 1\quad  \hbox{for all } \quad 0 \le j \leq \nu_2(n)-2.$$
\item If $\nu_2(n) > \nu_2(q+1),$ then
$$ [ \F_q(\alpha^{2^{j}}) : \F_q(\alpha^{2^{j+1}})] = \begin{cases} 2 \quad\quad \text{if} & 0 \leq j < \nu_2(n)- \nu_2(q+1) \\
1 \quad\quad \text{if} & \nu_2(n)- \nu_2(q+1) \leq j \leq \nu_2(n) -2. \end{cases}$$
\end{enumerate}

\end{lemma}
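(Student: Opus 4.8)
The plan is to reduce everything to the behaviour of the $2$-valuation $\nu_2(q^{2^j}-1)$ as $j$ varies, since for $\alpha$ a $2n$-th primitive root of unity the degree $[\F_q(\alpha^{2^j}):\F_q]$ equals $\ord_{2n/2^j}(q)$ (for $2^j\le 2n$), and the relevant tower steps are controlled by whether $\ord_{m}(q)$ drops when $m$ is halved. More precisely, writing $n = 2^{\nu_2(n)}\cdot n'$ with $n'$ odd, the element $\alpha^{2^j}$ is a primitive $(2n/2^j)$-th root of unity as long as $2^j \mid 2n$, i.e. for $0\le j\le \nu_2(n)+1$, and $\alpha^{2^{\nu_2(n)+1}}$ is a primitive $n'$-th root of unity (odd order). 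So each ratio $[\F_q(\alpha^{2^j}):\F_q(\alpha^{2^{j+1}})]$ is $\ord_{2n/2^j}(q)/\ord_{2n/2^{j+1}}(q)$, which is $1$ or $2$, and equals $2$ exactly when passing from modulus $2n/2^{j+1}$ to $2n/2^j$ (i.e. multiplying the modulus by $2$) increases the multiplicative order of $q$.

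First I would isolate the arithmetic core: for an odd prime power $q$ and integers with controlled $2$-part, determine when $\ord_{2^{a}c}(q) = 2\,\ord_{2^{a-1}c}(q)$ for $c$ odd. Using CRT, $\ord_{2^a c}(q) = \lcm(\ord_{2^a}(q),\ord_c(q))$, so the question is whether $\ord_{2^a}(q)$ doubles relative to $\ord_{2^{a-1}}(q)$ and whether that doubling survives the lcm with the odd part. Here is where Lemma~\ref{LEL} (the Lucas–Carmichael $2$-valuation formula) enters: for $q\equiv 3\pmod 4$ one has $\nu_2(q+1)\ge 2$, $\ord_4(q)=2$, and for $a\ge 3$, $q^{2^{a-2}}\equiv 1\pmod{2^a}$ iff $2^{a-2}\cdot\text{(even case of Lemma~\ref{LEL})}$ gives $\nu_2(q^{2^{a-2}}-1)\ge a$, i.e. $\nu_2(q^2-1)+(a-2)-1\ge a$, i.e. $\nu_2(q^2-1)\ge 3$, i.e. $\nu_2(q+1)\ge 2$, which always holds. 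Tracking this carefully yields: $\ord_{2^a}(q) = 1,2,2,\dots,2$ for $a=1,2,3,\dots$ until $2^a$ exceeds the "resonance" threshold $2^{\nu_2(q+1)+1}$, after which $\ord_{2^a}(q)$ starts doubling: $\ord_{2^a}(q) = 2^{\,a-\nu_2(q+1)}$ for $a\ge \nu_2(q+1)+1$, and $\ord_{2^a}(q)=2$ for $2\le a\le \nu_2(q+1)$. This is exactly the dichotomy appearing in (ii) and (iii).

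Then I would translate back. For part (i): $\alpha^{2^{\nu_2(n)-1}}$ has order $2n/2^{\nu_2(n)-1}=4n'$ and $\alpha^{2^{\nu_2(n)}}$ has order $2n'$ with $n'$ odd; since $q\equiv 3\pmod 4$, $q$ acts nontrivially modulo $4$ but trivially-or-not in a way that forces $\ord_{4n'}(q)=2\ord_{2n'}(q)$ because $\ord_4(q)=2$ while $\ord_2(q)=1$, giving the degree $2$; and $\alpha^{2^{\nu_2(n)}},\alpha^{2^{\nu_2(n)+1}}$ both have odd order $2n'$ and $n'$ with the same multiplicative order of $q$ (multiplying an odd modulus by nothing new — here $2n'$ vs $n'$, and $\ord_{2n'}(q)=\lcm(\ord_2 q,\ord_{n'} q)=\ord_{n'}(q)$), giving degree $1$. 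For parts (ii) and (iii): the step at index $j$ corresponds to $a = \nu_2(n)-j$ (so $a$ ranges over $2,\dots,\nu_2(n)$ as $j$ ranges over $0,\dots,\nu_2(n)-2$), and the step is nontrivial precisely when $\ord_{2^a}(q)$ doubles, i.e. $a > \nu_2(q+1)$, i.e. $\nu_2(n)-j > \nu_2(q+1)$, i.e. $j < \nu_2(n)-\nu_2(q+1)$ — again one must check the lcm with the odd part $n'$ does not absorb the factor $2$, which holds since $\ord_{n'}(q)$ is fixed throughout and the doubling happens in the $2$-part. When $\nu_2(n)\le\nu_2(q+1)$ no such $j$ exists and all these steps are trivial, which is (ii); when $\nu_2(n)>\nu_2(q+1)$ we get the split in (iii).

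The main obstacle I anticipate is the bookkeeping around the lcm in the CRT decomposition: one must verify that the doubling of $\ord_{2^a}(q)$ is \emph{not} already contained in $\ord_{n'}(q)$, so that it genuinely contributes a factor of $2$ to $[\F_q(\alpha^{2^j}):\F_q]$. Since $\ord_{n'}(q)$ is a \emph{fixed} odd-part-modulus quantity independent of $j$ (it only depends on $n'$, not on the power of $2$), and the $2$-part order is a power of $2$, the lcm multiplies cleanly: $\ord_{2^a n'}(q) = \ord_{2^a}(q)\cdot\frac{\ord_{n'}(q)}{\gcd}$ — actually $\lcm$ of a $2$-power and an odd number is their product, so $\ord_{2^a n'}(q) = \ord_{2^a}(q)\cdot \ord_{n'}(q)$ and the ratio between consecutive steps is exactly the ratio of the $2$-parts. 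Once this is pinned down the three cases fall out by direct substitution from the $\ord_{2^a}(q)$ formula derived via Lemma~\ref{LEL}; I would present it as a short computation rather than a separate lemma. A secondary care point is the boundary indices $j=\nu_2(n)-1$ and $j=\nu_2(n)$ in (i), where the modulus $2n/2^j$ becomes $4n'$ and $2n'$ and one must not confuse "primitive $2n$-th root" conventions at the top of the tower; handling these two steps explicitly (as (i) does) and then doing the uniform argument for $0\le j\le\nu_2(n)-2$ keeps the logic clean.
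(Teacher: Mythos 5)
Your reduction to multiplicative orders is sound: $[\F_q(\alpha^{2^j}):\F_q]=\ord_{2n/2^j}(q)$, and writing $n=2^{\nu_2(n)}n'$ with $n'$ odd, $\ord_{2^an'}(q)=\lcm\bigl(\ord_{2^a}(q),\ord_{n'}(q)\bigr)$; your computation of $\ord_{2^a}(q)$ for $q\equiv 3\pmod 4$ via Lemma \ref{LEL} is also correct. The gap is exactly at the point you yourself flagged as the main obstacle and then dismissed: you claim that ``$\lcm$ of a $2$-power and an odd number is their product,'' but the second argument of that $\lcm$ is $\ord_{n'}(q)$, the \emph{order} of $q$ modulo the odd number $n'$, and this order need not be odd (e.g.\ $\ord_5(3)=4$). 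When $\nu_2(\ord_{n'}(q))>0$, the doubling of $\ord_{2^a}(q)$ is absorbed by the $\lcm$ for the smaller values of $a$, so the ratio of consecutive degrees is not ``exactly the ratio of the $2$-parts,'' and the dichotomy in (ii)/(iii) does not follow from your argument. Concretely, take $q=3$ (so $\nu_2(q+1)=2$) and $n=80=2^4\cdot 5$: then $\ord_{160}(3)=8$ while $\ord_{80}(3)=\ord_{40}(3)=\ord_{20}(3)=\ord_{10}(3)=4$, so $[\F_3(\alpha^{2}):\F_3(\alpha^{4})]=1$ although $j=1<\nu_2(n)-\nu_2(q+1)=2$, and likewise $[\F_3(\alpha^{2^{3}}):\F_3(\alpha^{2^{4}})]=1$. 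Thus your proof (and indeed the statement, read literally for an arbitrary $2n$-th primitive root of unity) breaks down unless one adds the hypothesis that $\ord_{n'}(q)=[\F_q(\alpha^{2^{\nu_2(n)+1}}):\F_q]$ is odd; under that hypothesis your $\lcm$ computation is correct and all three parts fall out exactly as you describe.

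For comparison, the paper's proof works directly with $k_j=[\F_q(\alpha^{2^j}):\F_q]$, extracts from $\alpha^{2^j(q^{k_j}-1)}=1$ only the congruence modulo $2^{\nu_2(n)+1}$, applies Lemma \ref{LEL}, and takes $\nu_2(k_j')$ minimal subject to that single inequality; along the way it asserts that $[\F_q(\alpha^{2^{\nu_2(n)+1}}):\F_q]$ is odd. So it is essentially your argument in different clothing, resting on the same implicit assumption about the odd part, which your CRT formulation at least makes visible. To turn your proposal into a complete proof you must either impose that $\ord_{n'}(q)$ is odd or carry $\nu_2(\ord_{n'}(q))$ through the $\lcm$, in which case the thresholds in (i) and (iii) get shifted by that quantity.
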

\begin{proof}
Let us denote by $k=[ \F_q(\alpha) : \F_q] $, i.e., $k$ is  the degree of the minimal polynomial of   $\alpha$ over $\F_q$.  It is known that $k=ord_{2n} q$, i.e.,   the smallest positive integer $l$  such that
$q^l \equiv 1 \pmod{2n}$, in particular, we have that $$2 \leq \nu_2(2n) \leq \nu_2(q^k-1).$$
Since  $ q \equiv3 \pmod4$,  that inequality implies that $k$ is even.
For each $0 \leq j \leq \nu_2(n)$ we  denote
$ k_j=[ \F_q(\alpha^{2^j}) : \F_q ].$
It is clear that  $[ \F_q(\alpha^{2^{j-1}}) : \F_q(\alpha^{2^{j}})] = 1$  or $2$, and   hence either $ k_{j+1} = k_j$ or $k_{j+1} = 2k_j$.  
By definition,   $k_j$ is the smallest positive integer such  that the relation  $\alpha^{2^jq^{k_j}} = \alpha^{2^j}$ is satisfied,  or equivalently 
$\alpha^{2^j(q^{k_j}-1)} = 1$. In particular we have
 $$ {2^j(q^{k_j}-1)} \equiv 0 \pmod {2^{\nu_2(n)+1}}$$
and consequently 
 \begin{equation}\label{nu}
 \nu_2(q^{k_j}-1) \ge \nu_2(n)+1-j,\quad \text{for each $0 \leq j \leq \nu_2(n)$.}
 \end{equation}
  We observe  that in the case when 
  $j = \nu_2(n)$,  it follows that $\nu_2(q^{k_{\nu_2(n)}}-1) \ge 1$ and, from the minimality of  $k_{\nu_2(n)}$, we conclude that $k_{\nu_2(n)}$ is odd and $\nu_2(q^{k_{\nu_2(n)}}-1) = 1$.
Furthermore,  $[ \F_q(\alpha):\F_q]$ is even,  $[\F_q(\alpha^{2^{\nu_2(n)+1}}): \F_q]$ is odd and consequently  $k_{\nu_2(n)} = k_{\nu_2(n)+1}$.  
Putting  $j = \nu_2(n)-1$ in Inequality  (\ref{nu}) we obtain 
 $$ \nu_2(q^{k_{\nu_2(n)-1}}-1) \ge 2,$$
hence $k_{\nu_2(n)-1}$ is even and therefore $k_{\nu_2(n)-1} = 2k_{\nu_2(n)}$, i.e.
 $$[ \F_q(\alpha^{2^{\nu_2(n)-1}}) : \F_q(\alpha^{2^{\nu_2(n)}})] = 2.$$
 The following diagram  shows, partially,  the degrees  of intermediary field extensions  generated by the powers $\alpha^{2^i}$ of  $\alpha$.
$$\xymatrix{
\mathbb{F}_q(\alpha) \ar@{-}[d] \\ 
\mathbb{F}_q(\alpha^2) \ar@{-}[d] \\ 
\vdots \ar@{-}[d] \\ 
\mathbb{F}_q(\alpha^{2^{\nu_2(n)-1}}) \ar@{-}[d]\ar@{-}@/^1pc/[d]^2 \\ 
\mathbb{F}_q(\alpha^{2^{\nu_2(n)}}) \ar@{-}[d]\ar@{-}@/^1pc/[d]^1 \\ 
\mathbb{F}_q(\alpha^{2^{\nu_2(n)+1}}) \ar@{-}[d]\ar@{-}@/^1pc/[d]^{\text{odd}}\\ 
\mathbb{F}_q}$$
 
 We still need to analyze the fields on the top of this tower.  If  $0 \leq j < \nu_2(n)$, then $k_j$ is even. Rewriting  $k_j = 2k_j'$  and by Lemma \ref{LEL} we obtain
$$ \nu_2(q^{k_j}-1) = 1 + \nu_2(q+1) + \nu_2(k_j'),$$
thus   (\ref{nu})  is equivalent to
 \begin{equation}\label{nu2}\nu_2(k_j') \ge \nu_2(n)- \nu_2(q+1) -j,\end{equation}
where   $\nu_2(k_j')$ is  the smallest non-negative integer  satisfying this inequality. 
 We remark that  (\ref{nu2}) is trivial in the case when the right side of the inequality is less or equal to zero.  From here we need to consider two cases:
 \begin{enumerate}[1)]
 \item  If $\nu_2(n) \leq \nu_2(q+1)$ it follows that $\nu_2(k_j')=0$ and therefore
 $\nu_2(k_j)=1$ for every $0\leq j\leq \nu_2(n)-2$.
 Thus
  $$k_j = k_{j+1} \quad \hbox{and}\quad [\F_q(\alpha^{2^{j}}):\F_q(\alpha^{2^{j+1}})] = 1\quad\text{for every}\quad 0 \leq j \leq  \nu_2(n)-2.$$
 
 
 \item  If $\nu_2(n) > \nu_2(q+1)$, we have  to consider two possibilities:
\vspace{2mm}

 \begin{enumerate}[2.1)]
 \item If  $ j \ge \nu_2(n)-\nu_2(q+1)$, 
it follows that $\nu_2(k'_j)=1$ and therefore $k_j = k_{j+1}$  for every $j$ such that $\nu_2(n)-\nu_2(q+1) \leq j \leq \nu_2(n)-2$.
 \vspace{2mm}

 \item If $ j < \nu_2(n)-\nu_2(q+1)$
then 
$\nu_2(k'_j)=\nu_2(n)- \nu_2(q+1) -j $, hence 
$$\nu_2(k'_{j-1})=\nu_2(n)- \nu_2(q+1) -j+1 \quad\text{ and }\quad \nu_2(k'_j)= \nu_2(k'_{j-1})+1,$$  consequently  $[\F_q(\alpha^{2^{j-1}}):\F_q(\alpha^{2^{j}})] = 2$ for every $ 1\leq j < \nu_2(n)-\nu_2(q+1)$.  In particular,  $[\F_q(\alpha):\F_q(\alpha^2)] = 2$.\qed
 \end{enumerate}
 \end{enumerate}
\end{proof}
\begin{corollary}
Let $q$ be power of a prime such that $q \equiv 3 \pmod4$ and   $f_i$ be an  irreducible  $s$-seft-involutive  factor of $x^n-1 \in \F_q[x]$. Let us denote by $\xi_i$ some  root of  $f_i$.
\begin{enumerate}[1)]
\item If $\nu_2(n)>\nu_2(q+1)$ then
$$ \F_q(\xi_i+\xi_i^s, \xi_i^{s+1}) = \F_q(\xi_i^2).$$
\item If $\nu_2(n) \leq \nu_2(q+1)$ then
$$ \F_q(\xi_i+\xi_i^s, \xi_i^{s+1}) = \F_q(\xi_i^{2^{\nu_2(n)}}).$$

\end{enumerate}
\end{corollary}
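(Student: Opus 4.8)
The plan is to obtain the corollary by combining Lemma~\ref{lema1}, which computes exactly $[\F_q(\xi_i):L]$ for $L:=\F_q(\xi_i+\xi_i^s,\xi_i^{s+1})$, with Lemma~\ref{Lema da extensao}, which controls every step of the tower $\F_q(\xi_i)\supseteq\F_q(\xi_i^2)\supseteq\F_q(\xi_i^4)\supseteq\cdots$; the link between them is the elementary fact that a finite extension $\F_{q^m}/\F_q$ possesses a unique intermediate field of each index dividing $m$. Consequently, as soon as one exhibits an exponent $j$ with $[\F_q(\xi_i):\F_q(\xi_i^{2^j})]=2$, the field $\F_q(\xi_i^{2^j})$ is forced to coincide with $L$, and there is nothing left to check.

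First I would apply Lemma~\ref{lema1} with $\alpha=\xi_i$: since $f_i$ is a monic irreducible $s$-self-involutive factor of $x^n-1$ (the hypothesis $s^2\equiv 1\pmod n$ being inherited from the ambient metacyclic setting), the lemma gives $[\F_q(\xi_i):L]=2$. In particular $[\F_q(\xi_i):\F_q]$ is even, and by the uniqueness of the index-$2$ subfield just recalled, $L$ is precisely that subfield.

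Next I would read off from Lemma~\ref{Lema da extensao}, applied with $\alpha=\xi_i$, where along the tower the degree jumps from $1$ to $2$. If $\nu_2(n)>\nu_2(q+1)$, item~(iii) evaluated at $j=0$ yields $[\F_q(\xi_i):\F_q(\xi_i^2)]=2$ outright, so $\F_q(\xi_i^2)=L$; this is part~(1). If $\nu_2(n)\le\nu_2(q+1)$, item~(ii) collapses the top of the tower, $\F_q(\xi_i)=\F_q(\xi_i^2)=\cdots=\F_q(\xi_i^{2^{\nu_2(n)-1}})$, while item~(i) supplies the single degree-$2$ step $[\F_q(\xi_i^{2^{\nu_2(n)-1}}):\F_q(\xi_i^{2^{\nu_2(n)}})]=2$ (its companion degree-$1$ equality showing the tower has stabilized by that level); multiplying the degrees gives $[\F_q(\xi_i):\F_q(\xi_i^{2^{\nu_2(n)}})]=2$, hence $\F_q(\xi_i^{2^{\nu_2(n)}})=L$, which is part~(2).

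The point that will need care --- and the one I expect to be the real obstacle --- is reconciling the hypotheses of Lemma~\ref{Lema da extensao}, stated for a primitive $2m$-th root of unity with $m$ even, with the root $\xi_i$ of $x^n-1$, whose order only divides $n$: one must verify that $\xi_i$ genuinely satisfies those hypotheses and that the exponent written $\nu_2(n)$ there is the one that governs the jump for $\xi_i$ (in the situation at hand $n$ is even and the relevant factors have $\nu_2(\ord(\xi_i))=\nu_2(n)$, which is what makes the statement go through with $\nu_2(n)$ in place of $\nu_2(\ord(\xi_i))$). Once this bookkeeping is settled, the conclusion follows at once from the uniqueness of the index-$2$ subfield.
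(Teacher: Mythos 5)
Your route is the same as the paper's: the printed proof is a one-line appeal to Lemma \ref{lema1} together with Lemma \ref{Lema da extensao}, which is exactly the combination you spell out (the degree-two subextension from Lemma \ref{lema1}, the location of the unique degree-two step of the tower from Lemma \ref{Lema da extensao}, and uniqueness of the index-$2$ subfield of a finite extension of $\F_q$). The caveat you flag at the end, however, is not mere bookkeeping: Lemma \ref{Lema da extensao} applies to $\xi_i$ only when $\nu_2(\ord(\xi_i))=\nu_2(n)\ge 2$, and without that the corollary as literally stated is false. For instance take $q=3$, $n=40$, $s=9$ (so $s^2\equiv 1\pmod{40}$) and $f_i=\Phi_5(x)=x^4+x^3+x^2+x+1$, which is irreducible and $s$-self-involutive over $\F_3$, with $\nu_2(n)=3>2=\nu_2(q+1)$; here $\F_3(\xi_i+\xi_i^s,\xi_i^{s+1})=\F_3(\xi_i+\xi_i^{-1})=\F_9$ while $\F_3(\xi_i^2)=\F_3(\xi_i)=\F_{81}$, so case (1) fails. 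Hence your closing assertion that ``the relevant factors have $\nu_2(\ord(\xi_i))=\nu_2(n)$'' is an implicit additional hypothesis (it does hold for the factors the paper actually uses later, e.g.\ roots of factors of $x^n+1$ inside $x^{2n}-1$, whose orders carry the full $2$-part), not something derivable from the corollary's stated hypotheses; the paper's own one-line proof glosses over exactly the same point, so apart from making that restriction explicit your argument matches it.
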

\begin{proof}
The proof follows from previous  lemma  and  Lemma \ref{lema1}.\qed
\end{proof}

\section{Group algebra of split metacyclic group}

Throughout this section, $G$ is  a non-abelian  group with the following presentation
\begin{equation}\label{representationD} G = \langle{x,y \mid x^n= 1 = y^2 , xy =y  x^s \rangle}.
\end{equation}
The polynomial $x^n-1 \in \F_q[x]$ splits into monic irreducible factors as:
\begin{equation}\label{fatoracaoxn-1} x^n-1 = f_1f_2 \cdots f_rf_{r+1}f^{*_s}_{r+1}f_{r+2}f^{*_s}_{r+2} \cdots f_{r+t}f^{*_s}_{r+t},
\end{equation}
where $f_1 = x - 1$,  $ f_2 = x + 1$ if $n$ is even, $f^{*_s}_j = f_j $ for each  $2 \leq j \leq r$, where $r$ is the number of $s$-self-involutive factors  and $2t$  is the number of non $s$-self-involutive factors in the decomposition.

\begin{theorem} \label{teorema1}
Let $G$ be a metacyclic group defined by the relation  (\ref{representationD}), 
where $s^2 \equiv 1 \pmod n$.  If   $d := \gcd(n, s-1)$, then
$$ \F_qG  \cong   \displaystyle  \bigoplus_{\lll \mid d}2\cdot\frac{\phi(\lll)}{ord_{\lll}q}\F_q( \theta_{\lll})  \oplus  \bigoplus_{ {1 \leq i \leq r } \atop f_i(x) \nmid (x^d-1)} A_i  \oplus \bigoplus_{ {r+1 \leq i \leq r+t } \atop f_i(x) \nmid (x^d-1)} B_i ,$$
where $\theta_{\lll}$ is a  $\lll$-th primitive root of the unit  and 
$$
\begin{cases}
A_i \cong M_2(\F_q(\xi_i+\xi_i^s,\xi_i^{s+1}))  \\
B_i \cong M_2(\F_q(\xi_i))\\
\end{cases}$$
with $\xi_i$ is a root of  the polynomial  $f_i(x)$.
\end{theorem}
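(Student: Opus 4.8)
The plan is to view $\F_qG$ as the skew group ring of $\langle y\rangle\cong C_2$ acting on $\F_qC_n$, where $C_n=\langle x\rangle$, to decompose $\F_qC_n$ by the factorization \eqref{fatoracaoxn-1} of $x^n-1$, and then to reassemble the simple components according to how conjugation by $y$ permutes the primitive idempotents of $\F_qC_n$. Concretely, write $\F_qG=\F_qC_n\oplus(\F_qC_n)y$ with $yc=\tau(c)y$ for $c\in\F_qC_n$, where $\tau$ is the $\F_q$-algebra automorphism of $\F_qC_n$ induced by $x\mapsto x^s$; since $s^2\equiv1\pmod n$ we have $\tau^2=\mathrm{id}$. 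By the Chinese Remainder Theorem, $\F_qC_n\cong\F_q[x]/\langle x^n-1\rangle\cong\bigoplus_k\F_q[x]/\langle g_k\rangle$, the sum running over the monic irreducible factors $g_k$ of $x^n-1$ appearing in \eqref{fatoracaoxn-1}, and the primitive central idempotent $e_k$ of $\F_qC_n$ attached to $g_k$ satisfies $\F_qC_n\,e_k\cong\F_q[x]/\langle g_k\rangle$.

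The first substantive step is to determine the action of $\tau$ on $\{e_k\}$. Passing to the character (equivalently, the $\mathrm{Spec}$) picture, $\tau$ acts on $n$-th roots of unity by $\zeta\mapsto\zeta^s$, so it carries the set of roots of $g_k$ to the set of roots of $g_k^{*_s}$; using $s^2\equiv1\pmod n$ one gets that $\tau(e_k)=e_k$ exactly when $g_k$ is $s$-self-involutive (the factors $f_1,\dots,f_r$), while $\tau$ interchanges the two idempotents attached to each pair $\{f_i,f^{*_s}_i\}$, $r+1\le i\le r+t$. Since $\gcd(|G|,q)=1$, the algebra $\F_qG$ is semisimple, and every $\tau$-invariant idempotent $e\in\F_qC_n$ is central in $\F_qG$ (it is central in $\F_qC_n$, and $ye=\tau(e)y=ey$). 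Hence $\{\,e_i:1\le i\le r\,\}\cup\{\,e_i+\tau(e_i):r+1\le i\le r+t\,\}$ is an orthogonal family of central idempotents of $\F_qG$ summing to $1$, and it remains to identify the two-sided ideal each one cuts out.

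Next I would compute these ideals case by case, writing $F_i:=\F_q(\xi_i)$ and identifying $\F_qC_n\,e_i\cong F_i$ (so $\tau|_{F_i}$ is the map $\xi_i\mapsto\xi_i^s$). \textbf{(a)} If $1\le i\le r$ and $f_i\mid x^d-1$, equivalently $\xi_i^s=\xi_i$, i.e.\ $\tau$ acts trivially on $F_i$, then $\F_qG\,e_i\cong F_i[y]/\langle y^2-1\rangle\cong F_i\times F_i$ since $\gcd(2,q)=1$. \textbf{(b)} If $1\le i\le r$ and $f_i\nmid x^d-1$, then $\tau|_{F_i}$ is an automorphism of order $2$, whose fixed field equals $\F_q(\xi_i+\xi_i^s,\xi_i^{s+1})$ by Lemma~\ref{lema1}; letting $\F_qG\,e_i$ act on $F_i$ --- a $2$-dimensional space over that fixed field --- by multiplication together with $\tau$, one gets a homomorphism into $\mathrm{End}_{\F_q(\xi_i+\xi_i^s,\xi_i^{s+1})}(F_i)\cong M_2(\F_q(\xi_i+\xi_i^s,\xi_i^{s+1}))$ which is injective (if $a+by$ acts as $0$, evaluating on $1$ and on $\xi_i$ and using $\xi_i\ne\xi_i^s$ forces $a=b=0$) and hence, by a count of $\F_q$-dimensions, an isomorphism; thus $\F_qG\,e_i\cong A_i$. \textbf{(c)} If $r+1\le i\le r+t$, then $f_i\nmid x^d-1$ automatically, and $\F_qG(e_i+\tau e_i)$ equals $(F_i\times\tau(F_i))\oplus(F_i\times\tau(F_i))y$ with $y$ swapping the two field factors and $y^2=1$; a direct matrix computation (letting this algebra act on $F_i\oplus\tau(F_i)$) identifies it with $M_2(F_i)=B_i$.

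Finally I would assemble. The ideals of type \textbf{(a)} are precisely those arising from the irreducible factors of $x^d-1=\prod_{\ell\mid d}\Phi_\ell(x)$; as $\Phi_\ell$ splits into $\phi(\ell)/\ord_\ell q$ irreducible factors over $\F_q$, each yielding the field $\F_q(\theta_\ell)$ with $\theta_\ell$ a primitive $\ell$-th root of unity, together they contribute $\bigoplus_{\ell\mid d}2\cdot\tfrac{\phi(\ell)}{\ord_\ell q}\F_q(\theta_\ell)$. Adjoining the $A_i$ ($1\le i\le r$, $f_i\nmid x^d-1$) and the $B_i$ ($r+1\le i\le r+t$) gives the claimed decomposition; a dimension check ($2d+2\sum\deg f_i+4\sum\deg f_i=2n=|G|$, the sums over the pertinent index sets) together with uniqueness of the Wedderburn decomposition confirms that nothing is missing. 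The main obstacle is case \textbf{(b)}/\textbf{(c)}: showing that the two ``crossed-product'' ideals are honest full matrix algebras. I would avoid the general crossed-product and Brauer-group machinery and instead verify faithfulness of the obvious representation by hand, so that the dimension count alone forces the isomorphism; identifying the right base field $\F_q(\xi_i+\xi_i^s,\xi_i^{s+1})$ in case \textbf{(b)} is exactly where Lemma~\ref{lema1} enters.
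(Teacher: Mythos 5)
Your proposal is correct, but it is organized quite differently from the paper's argument. The paper does not pass through the skew-group-ring and central-idempotent picture: it first maps $\F_qG$ onto the group algebra of the abelianization $H=\langle x^{n/d},y\rangle$ and invokes Perlis--Walker (Theorem \ref{perliswalker}) for the abelian part; then, for each irreducible factor $f_i$ of $(x^n-1)/(x^d-1)$, it writes down the explicit two-dimensional representation $x\mapsto \mathrm{diag}(\xi_i,\xi_i^s)$, $y\mapsto$ the swap matrix, conjugates by an explicit matrix $Z_i$ as in (\ref{comp_conj}) to exhibit the image inside $M_2(\F_q(\xi_i+\xi_i^s,\xi_i^{s+1}))$ in the $s$-self-involutive case, and finally proves that the direct sum $\rho$ of all these maps is injective by evaluating $P(x)+Q(x)y\in\ker\rho$ at the roots of $x^n-1$, deducing surjectivity from one global dimension count $2n\le\dim\le 2n$. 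You instead decompose $\F_qC_n$ by CRT, note that the $\tau$-orbit sums of its primitive idempotents are central in $\F_qG$, and identify each block separately: trivial action gives $F_i\times F_i$, the quadratic crossed product $F_i\rtimes\langle\tau\rangle\cong \mathrm{End}_{F_i^{\tau}}(F_i)$ handled by your elementary faithfulness-plus-dimension argument (Galois descent in disguise), and swapped pairs give $M_2(F_i)$; Lemma \ref{lema1} enters at the same point in both arguments, to pin down the index-two fixed field as $\F_q(\xi_i+\xi_i^s,\xi_i^{s+1})$. Your route makes completeness essentially automatic (the orbit idempotents are orthogonal and sum to $1$) and avoids guessing conjugating matrices; the paper's route produces the isomorphism explicitly, and those explicit maps $\psi_g$, $\sigma_i\circ\tau_i$ and the matrices $Z_i$ are reused verbatim in Sections 5 and 6 to compute the central and non-central idempotents, which is why the authors take the more computational path. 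If you write your version up, do spell out the orthogonality and sum-to-one of the orbit idempotents and the ``direct matrix computation'' in your case (c) (the image contains all diagonal matrices and the swap, hence all matrix units), but there is no gap in the outline.
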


\begin{proof}
Let $H$ be a subgroup of $G$ defined as $H=\langle{z, y }\rangle$, where $z=x^{n/d}$. Since
$$zy= x^{n/d}y = y(x^{n/d})^s =y x^{n (s-1)/d} x^{n/d}= yx^{n/d}=yz,$$
then   $H$ is  an abelian group isomorphic to the abelianization of  $G$.
Let $\psi$ be  the group homomorphism defined by the generators of  $G$ as
$$\begin{array}{cccc}
\psi: & G & \longrightarrow & H \\
& x & \longmapsto & z \\
& y & \longmapsto & y .\end{array}$$
This homomorphism can  be  extended to a homomorphism of  group algebras
$$\Psi:  \F_qG  \longrightarrow  \F_qH,  $$
that is surjective  and then
 $\F_qH$ is  a non-simple component of   $\F_qG$.   Now, using that  $ H = \langle{y}\rangle \times \langle{z}\rangle$ and 
$ \F_q \langle{y}\rangle \cong \F_q \oplus \F_q,$
by  
(\ref{decom_ciclico}) we have that
\begin{align}\label{parteabeliana} \F_qH &= \F_q  \left(\langle{y}\rangle \times \langle{z}\rangle \right) \cong  \left(\F_q \oplus \F_q \right)\langle{z}\rangle \nonumber\\
&\cong \F_qC_d\oplus \F_qC_d 
\cong \bigoplus\limits_{\lll \mid d} 2\cdot\dfrac{\phi(\lll)}{ord_{\lll}q}\F_q( \theta_{\lll}),\end{align}
where the last  isomorphism follows from  Perlis-Walker's Theorem (Theorem \ref{perliswalker}).

This  isomorphism  can be  shown explicitly, considering  for each irreducible factor  $g(z)$ of  $z^d-1$ in $\F_q[z]$,   the algebra-homomorphism determined by the generators as
$$ \begin{array}{cccc} \psi_g: & \F_q H & \longrightarrow & \frac{\F_q[z]}{g(z)}\oplus\frac{\F_q[z]}{g(z)} \\
& z & \longmapsto & (\overline{z}, \overline{z}) \\
& y & \longmapsto & (1,  -1) \end{array}$$
%
%
%

We note that $\dfrac{\F_q[z]}{g(z)}\cong \F_q(\theta_{g}) $, where $\theta_{g}$ is a root of $g$ and therefore   $\theta_{g}$  is a  $d$-th root of  unit, not necessarily  primitive.  The isomorphism  is  the direct sum of the homomorphisms  constructed in the previous way. 

%
%
%
%
%

In the other hand, following the notation of 
(\ref{fatoracaoxn-1}),  let $f_i(x)$ be an irreducible factor of  $\frac{x^n-1}{x^d-1}$ and $\xi_i$ a root of $f_i$. 
Let us define $\tau_i$ the  homomorphism  determining by
$$ \begin{array}{cccc} \tau_i : & \F_q G & \longrightarrow & M_2(\F_q(\xi_i)) \\
& x & \longmapsto & \left( \begin{array}{cc} \xi_i & 0 \\ 0 & \xi_i^s \end{array}\right) \\
& y & \longmapsto & \left( \begin{array}{cc} 0 & 1 \\ 1 & 0 \end{array}\right).  \end{array} $$ 
By direct calculation it is easy to verify  that 
 $ ( \tau_i(x))^n = I$ and $\tau_i(x)\tau_i(y) = \tau_i(y)\tau_i(x)^s$ and then  $\tau_i$ is a well defined homomorphism.  These homomorphisms  are not necessarily  surjective.  Indeed, for each $1 \leq i \leq r$, let us define  $Z_i = \left( \begin{array}{cc} 1 & -\xi_i \\ 1 & -\xi_i^s \end{array}\right)$ and $\sigma_i$ be the map given by the conjugation   determined by  $Z_i$, i.e. 
\begin{equation}\label{comp_conj}
  \begin{array}{cccc} \sigma_i: & M_2(\F_q(\xi_i)) & \longrightarrow & M_2(\F_q(\xi_i)) \\ & X & \longmapsto& Z_i^{-1 } X Z_i \end{array}.
\end{equation}
Composing that homomorphism  with 
 $\tau_i$, we obtain a new homomorphism that satisfies  the relations 
$$ \sigma_i\circ\tau_i(x) = \left( \begin{array}{cc} 0 & \xi_i^{s+1} \\ -1 & \xi_i+\xi_i^s \end{array}\right) \quad \hbox{and} \quad \sigma_i\circ\tau_i(y) = \left( \begin{array}{cc} 1 & -(\xi_i+\xi_i^s) \\ 0 & -1 \end{array}\right),$$
consequently  the image  of the map  $\sigma_i\circ \tau_i$  is contained in  $\F_q( \xi_i+\xi_i^s , \xi_i^{s+1}) \subsetneq \F_q(\xi_i)$.
It follows that, for each  $1 < i \leq r$ 
\begin{equation}\label{dimensao}
 \dim_{\F_q}( Im(\tau_i)) = \dim_{\F_q}( Im(\sigma_i\circ \tau_i)) \leq 4\dim_{\F_q}( \F_q( \xi_i+\xi_i^s , \xi_i^{s+1})), \end{equation}
and 
\begin{equation}
\dim_{\F_q}( Im(\tau_i)) \leq 4 \dim_{\F_q}( \F_q( \xi_i)).\end{equation}
 in the case  when $r+1 \leq i \leq r+t$.

From the homomorphisms previously  defined,
let us define  $\tau$  the  $\F_q$-algebras  homomorphism
$$\displaystyle\bigoplus_{g|(x^d-1)} \psi_{g}\oplus \bigoplus_{f_i\mid \frac{x^n-1}{x^d-1}}\tau_i.$$ 
We claim that $\tau$ is injetive. Indeed, let $ u = P(x) + Q(x)y \in \F_qG$ be an element in $Ker(\tau)$, where $P(x)$ and $Q(x)$ are polynomials of degree less or equal to $n-1$.  Since $\tau(u)=0$, it follows that 
$\psi_g(u)=0$ for all  $g(x)|(x^d-1)$ and   $\tau_i(u)=0$  for each  $f_i\mid \frac {x^n-1}{x^d-1}.$ 
In the first case we have that 
$$ \psi_{g}(u) = \psi_{g}(P(x)+Q(x)y) = (P(\theta_{g}) + Q(\theta_{g}), P(\theta_{g})-Q(\theta_{g})) = (0,0),$$
where  $\theta_g$ is a root of $g(x)$. Whereas that the characteristic of $\F_q$ is different that 2, it follows that  $P(x)$ and $Q(x)$ are zero when we evaluate these polynomials  at the roots of  $x^d-1$. 
Besides that, for each $f_i\mid \frac{x^n-1}{x^d-1}$, we have that  
$$\tau_i(u) = \left( \begin{array}{cc} P(\xi_i) & Q(\xi_i) \\ Q(\xi_i^s) &P( \xi_i^s) \end{array}\right)  = \left( \begin{array}{cc} 0 & 0 \\ 0 & 0 \end{array}\right) $$
$$P( \xi_i^s) = P( \xi_i) = 0 \quad \hbox{and}\quad Q(\xi_i^s) = Q(\xi_i) = 0,$$
therefore $P(x)$ and $Q(x)$ are divisible by the polynomials  $f_i(x)$ and $f_i^{*_s}(x)$ and then also divisible by  $\frac {x^n-1}{x^d-1}$.   From these two results we obtain that  $P(x)$ and $Q(x)$ are divisible by  $x^n-1$ and since the degree of these polynomials are less that  $n$,  we conclude that $P(x)$ and $Q(x)$ are the null polynomial.
In conclusion,   the homomorphism
$$ \rho : F_qG \longrightarrow \bigoplus_ {\lll \mid d} 2\cdot\frac{\phi(\lll)}{ord_{\lll}q}\F_q( \theta_{\lll}) \oplus \bigoplus_ {i = 1}^{r} A_i  \oplus \bigoplus_{i = r+1}^{r+t} B_i$$
defined by 
$$  \begin{cases} \psi_{g}, & \text{if $g$ is an irreducible fator of  $x^d-1$} \\
\sigma_i\circ\tau_i, & \hbox{if} \quad 1 \leq i \leq r \quad \hbox{and}\quad f_i \nmid (x^d-1) \\
\tau_i, & \hbox{if}\quad r+1 \leq i \leq r+t \quad \hbox{and}\quad f_i \nmid (x^d-1)   \end{cases}$$
is an injective map. Finally by Lemma \ref{lema1} and  
(\ref{dimensao}) we have that 
\begin{align}
2n & \leq \dim_{\F_q}\displaystyle\Bigl(\bigoplus_{\lll \mid d}2\cdot\frac{\phi(\lll)}{ord_{\lll}q}\F_q( \theta_{\lll})\oplus\bigoplus_ {i = 1}^{r} A_i  \oplus \bigoplus_{i = r+1}^{r+t} B_i \Bigr) \nonumber\\
&\le
2d +  4\cdot \displaystyle\sum_{i=1}^{r}\dim_{\F_q}( \F_q( \xi_i+\xi_i^s , \xi_i^{s+1}))+ 4\cdot \sum_{i=r+1}^{r+t}\dim_{\F_q}( \F_q( \xi_i))  \nonumber\\
& = 2d + 2\cdot\displaystyle\sum_{i=1}^{r}deg(  f_i) + 2\cdot\sum_{i=r+1}^{r+t}2\cdot deg(f_i) 
= 2 \left( d + deg\left(\frac{x^n-1}{x^d-1}\right)\right) = 2n,\nonumber\end{align}
consequently  the homomorphism is also surjective. \qed
\end{proof}

\section{Group algebra of non-split metacyclic group}

Throughout this section, $G$ is  a group with the following presentation
\begin{equation}\label{representationG}
G = \langle{ x,y \mid x^{2n}= 1, y^2 = x^n , xy = yx^s \rangle},
\end{equation}
where $s^2\equiv 1\pmod {2n}$, 
and the polynomial $x^{2n}-1 \in \F_q[x]$ splits into monic irreducible factors as:
\begin{equation}\label{fatoracaox2n-1} x^{2n}-1 = f_1f_2 \cdots f_rf_{r+1}f^{*_s}_{r+1}f_{r+2}f^{*_s}_{r+2} \cdots f_{r+t}f^{*_s}_{r+t},
\end{equation}
where $f_1 = x - 1$,  $ f_2 = x + 1$  and $f^{*_s}_j = f_j $ for each  $2 \leq j \leq r$, where $r$ is the number of $s$-self-involutive factors  and $2t$  is the number of non $s$-self-involutive factors in the decomposition.
We observe that if $s=-1$ then $G$ is the generalized  quaternion group  $Q_n$. It knows that $\Q Q_n$ have some irreducible components that are isomorphic to  non-commutative division rings.  In contrast,  in $\F_qQ_n$ this type of component does not appear, because  Wedderburn's Little  Theorem (Theorem 2.55 in \cite{ LiNi}) guarantees that  every finite division ring is a field.

We observe that  $[x,y]=x^{-1}y^{-1}xy=x^{s-1}$  then the commutator group $[G, G]$  is generated by  $x^d$ where $d=\gcd(2n, s-1)$.   Therefore, the abelianization of  $G$ is isomorphic   to a subgroup $H$ of $G$ determined by
\begin{equation}\label{abelianizado}
G_{ab}\cong H:=
\langle z,y| z^d=1,\quad y^2=z^{d/2}, \quad zy=yz\rangle,\end{equation}
where $z=x^{2n/d}$.
As in the  previous section, let us define the group homomorphism 
$$\begin{array}{cccc}
\psi: & G & \longrightarrow & H \\
& x & \longmapsto & z \\
& y & \longmapsto & y,\end{array}$$
that is surjective and can be extend to a group-algebra homomorphism $\Psi:  \F_qG  \longrightarrow  \F_qH $ and  hence $ \F_qG\cong \F_qH\oplus Ker(\Psi)$. Thus, we  need to understand   the structure of $\F_qH$ and $Ker(\Psi)$. 
 In  following lemma, we show explicitly  the components of the abelian part. 

\begin{lemma}\label{parteabeliana}Let  $G$ be a group with presentation (\ref{representationG}) and $H$  be a subgroup of  $G$   that is isomorphic to  the abelianization of $G$ defined in (\ref{abelianizado}).
\begin{enumerate}[1)]
\item If $s\equiv 1\pmod4 $ then  $\F_qH \cong \displaystyle\bigoplus_ {\lll \mid d} 2\cdot\frac{\phi(\lll)}{ord_{\lll}q}\F_q(\theta_{\lll})$.
 
\item If  $s\equiv 3\pmod4 $ and $q\equiv 1\pmod 4$
then 
 $\F_qH \cong \displaystyle\bigoplus_ {\lll \mid {d/2}} 4\cdot\frac{\phi(\lll)}{ord_{\lll}q}\F_q(\theta_{\lll})$.

\item If  $s\equiv 3\pmod4 $ and $q\equiv 3\pmod 4$ then $\F_qH \cong
\displaystyle\bigoplus_ {\lll \mid {d/2}} 2\cdot\frac{\phi(\lll)}{ord_{\lll}q}\F_q(\theta_{\lll}) \oplus\frac{\phi(\lll)}{ord_{\lll}q^2}\F_q(\theta_{2\lll}) $.
\end{enumerate}
\end{lemma}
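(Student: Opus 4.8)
The plan is to identify $H$ as a concrete abelian group and then read off $\F_qH$ from Perlis--Walker's Theorem (Theorem~\ref{perliswalker}) together with the decomposition~(\ref{decom_ciclico}). The first step is purely group-theoretic: since $\langle z\rangle\cong C_d$, $y^4=z^d=1$, $y^2=z^{d/2}$, and $\langle z\rangle\cap\langle y\rangle=\langle z^{d/2}\rangle$ has order $2$, the group $H$ has order $2d$ and contains $\langle z\rangle$ as a cyclic subgroup of index $2$; hence $H\cong C_{2d}$ or $H\cong C_2\times C_d$. Writing $H$ additively on the generators $z,y$ subject to $dz=0$ and $2y=(d/2)z$, the relation matrix $\left(\begin{smallmatrix}d&0\\-d/2&2\end{smallmatrix}\right)$ has determinant $2d$ and greatest common divisor of entries $\gcd(d/2,2)$, so its Smith normal form has diagonal entries $\gcd(d/2,2)$ and $2d/\gcd(d/2,2)$. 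Thus $H\cong C_{2d}$ when $d\equiv2\pmod4$ and $H\cong C_2\times C_d$ when $4\mid d$. Since $d=\gcd(2n,s-1)$ and (under the standing hypotheses of this section) $4\mid 2n$, the case $4\mid d$ is exactly $s\equiv1\pmod4$ and the case $d\equiv2\pmod4$ is exactly $s\equiv3\pmod4$; this yields the trichotomy of the statement, refined in the latter case according to $q\bmod4$.

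For part~(1), $H\cong C_2\times C_d$ gives $\F_qH\cong\F_qC_2\otimes_{\F_q}\F_qC_d\cong(\F_q\oplus\F_q)\otimes_{\F_q}\F_qC_d\cong\F_qC_d\oplus\F_qC_d$, using $\F_qC_2\cong\F_q\oplus\F_q$ since $\cha\F_q\neq2$. Applying~(\ref{decom_ciclico}) and the fact that $\Phi_{\lll}(x)$ splits into $\phi(\lll)/\ord_{\lll}q$ irreducible factors over $\F_q$ then gives $\bigoplus_{\lll\mid d}2\cdot\frac{\phi(\lll)}{\ord_{\lll}q}\F_q(\theta_{\lll})$.

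For parts~(2) and~(3), $H\cong C_{2d}$ with $\nu_2(d)=1$, so $2d=4\cdot(d/2)$ with $d/2$ odd, $H\cong C_4\times C_{d/2}$, and $\F_qH\cong\F_qC_4\otimes_{\F_q}\F_qC_{d/2}$. Now $x^4-1=(x-1)(x+1)(x^2+1)$ over $\F_q$, and the behaviour of $x^2+1$ depends on $q\bmod4$. If $q\equiv1\pmod4$ then $x^2+1$ splits, $\F_qC_4\cong\F_q^{\oplus4}$, hence $\F_qH\cong(\F_qC_{d/2})^{\oplus4}\cong\bigoplus_{\lll\mid d/2}4\cdot\frac{\phi(\lll)}{\ord_{\lll}q}\F_q(\theta_{\lll})$, which is part~(2). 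If $q\equiv3\pmod4$ then $x^2+1$ is irreducible, $\F_qC_4\cong\F_q\oplus\F_q\oplus\F_{q^2}$, hence $\F_qH\cong(\F_qC_{d/2})^{\oplus2}\oplus\F_{q^2}C_{d/2}$; applying~(\ref{decom_ciclico}) over $\F_q$ to the first two summands and over $\F_{q^2}$ to the last (the latter's component indexed by $\lll\mid d/2$ consisting of $\phi(\lll)/\ord_{\lll}q^2$ copies of the extension of $\F_{q^2}$ generated by a primitive $\lll$-th root of unity) and reorganizing by $\lll$ produces the decomposition of part~(3).

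The step I expect to be delicate is the final bookkeeping in the case $s\equiv3$, $q\equiv3\pmod4$: one has to describe the simple components of $\F_{q^2}C_{d/2}$ as $\F_q$-algebras and track their multiplicities so that, together with the two copies of $\F_qC_{d/2}$, they assemble into $\bigoplus_{\lll\mid d/2}\big(2\tfrac{\phi(\lll)}{\ord_{\lll}q}\F_q(\theta_{\lll})\oplus\tfrac{\phi(\lll)}{\ord_{\lll}q^2}\F_q(\theta_{2\lll})\big)$; this hinges on the parity of $\ord_{\lll}q$, which controls the degree over $\F_q$ of the field generated by a primitive $\lll$-th root of unity over $\F_{q^2}$. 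By contrast, the group-theoretic trichotomy of the first step, part~(1), and the $q\equiv1\pmod4$ subcase of parts~(2)--(3) are routine.
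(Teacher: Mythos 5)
Your argument is essentially the paper's: the paper also first identifies $H$ concretely (when $s\equiv 1\pmod 4$ it writes $H=\langle z,w\rangle\cong C_d\times C_2$ with $w=yz^{d/4}$, and when $s\equiv 3\pmod 4$ it writes $H=\langle t,y\rangle\cong C_{d/2}\times C_4$ with $t=z^2$ and $z=y^2t^{(d+2)/4}$), then uses $\F_qC_2\cong\F_q\oplus\F_q$, resp.\ the splitting of $x^4-1$ according to $q\bmod 4$, and finishes with Perlis--Walker. Your Smith-normal-form computation replaces these explicit generators; that is a harmless variant, though the paper keeps the generators because it immediately reuses them to write the explicit homomorphisms $\psi_{\lll}$ on $\F_qG$ that are needed in Theorem \ref{teorema2} and in the idempotent computations. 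Your implicit assumption $4\mid 2n$ in the case $s\equiv1\pmod4$ is the same one the paper makes when it asserts that $d/2$ is even.

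The step you flagged as delicate in case $s\equiv3$, $q\equiv3\pmod4$ deserves the caution: the simple components of $\F_{q^2}C_{d/2}$ are $\phi(\lll)/\ord_{\lll}(q^2)$ copies of $\F_{q^2}(\theta_{\lll})=\F_q(\theta_{\lll},\beta)$, and this field coincides with $\F_q(\theta_{2\lll})=\F_q(\theta_{\lll})$ (note $\lll$ odd) only when $\ord_{\lll}q$ is even. When $\ord_{\lll}q$ is odd --- already for $\lll=1$, where the component is $\F_{q^2}$, not $\F_q$ --- the field printed in the statement is too small, as an $\F_q$-dimension count shows: such an $\lll$ would contribute $3\phi(\lll)$ instead of $4\phi(\lll)$, so the right-hand side could not have dimension $2d$. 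So the summand should be read as $\F_q(\theta_{\lll},\beta)\cong\F_{q^2}(\theta_{\lll})\cong\F_q(\theta_{4\lll})$, which is exactly the third component of the paper's explicit map $\psi_{\lll}\colon t\mapsto(\theta_{\lll},\theta_{\lll},\theta_{\lll})$, $y\mapsto(1,-1,\beta)$; with that reading your bookkeeping closes uniformly in both parities of $\ord_{\lll}q$ (each $\lll$ contributes $2\phi(\lll)+\frac{\phi(\lll)}{\ord_{\lll}q^2}\cdot\lcm(2,\ord_{\lll}q)=4\phi(\lll)$), and the rest of your proof is complete.
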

\begin{proof}
It follows from  Representation Theorem for  Abelian Groups  that  $H$  is a direct product of cyclic groups. In order to  show explicitly  the direct product, we consider three cases:

{\em Case 1)}
 $s \equiv 1 \pmod 4$:    
Since  $d = \gcd(2n, s-1)$, then  $\frac{d}{2}$ is even and $H$ has the following presentation:
$$ H = \langle{z,w \mid z^d = 1, w^2=1\rangle}\cong C_d\times C_2 \quad \text{where}\quad  w = yz^{d/4} .$$
From Perlis-Walker's Theorem we have that
$$\F_qH \cong (\F_q \oplus \F_q)C_d \cong \F_qC_d \oplus \F_qC_d \cong \bigoplus_ {\lll \mid d} 2\cdot\frac{\phi(\lll)}{ord_{\lll}q}\F_q(\theta_{\lll}).$$
This isomorphism can be presented explicitly as 
$$ \begin{array}{ccccc}
\psi_{\lll}:& \F_qH &\longrightarrow& \F_q(\theta_{\lll}) \oplus \F_q(\theta_{\lll})\\
&z&\longmapsto&(\theta_{\lll}, \theta_{\lll})\\
&w& \longmapsto&(1, -1)\end{array}$$
where, for each irreducible factor of   $x^d-1$, we  choose some root  $\theta_{\lll}$  of order $\lll$. There exist exactly $\frac{\phi(\lll)}{ord_{\lll}q}$ factors of that type.
From this isomorphism we can construct  a surjective homomorphism    $\F_qG\to \F_q(\theta_{\lll}) \oplus \F_q(\theta_{\lll})$, which by notation abuse  we also denote by  $\psi_{\lll}$, defined from the generators of $G$ as
$$ \begin{array}{ccccc}
\psi_{\lll}:& \F_qG &\longrightarrow& \F_q(\theta_{\lll}) \oplus \F_q(\theta_{\lll})\\
&x&\longmapsto&(\theta_{\lll}, \theta_{\lll})\\
&y& \longmapsto&(\theta_{\lll}^{d/4},-\theta_{\lll}^{d/4})\end{array}$$

 {\em Case 2)}
 $s \equiv 3\pmod4$ and  $q\equiv 1 \pmod 4$: 
In this case, $d$ is even but not divisible by 4.
We claim that  $H$ can be represented as
$$ H = \langle{t,y \mid t^{d/2} =1, y^4=1 \rangle},\quad \text{where}\quad  t = z^2 .$$
In order to  prove that affirmation,  it is enough to show that $z$ can be written  as an expression that depends of  $t$ and $y$,  indeed
$$ z = z^{d+1} = z^{d/2}z^{d/2+1} = y^2t^{\frac{d+2}{4}}.$$
Thus $H \cong C_{d/2} \times C_4$ and 
$$\F_qH \cong \F_q(C_4 \times C_{d/2}) \cong (\F_qC_4)C_{d/2} 
\cong (\F_q\oplus\F_q\oplus\F_q\oplus\F_q)C_{d/2} $$
$$\hspace{3.5cm}\cong \F_qC_{d/2}\oplus\F_qC_{d/2}\oplus\F_qC_{d/2}\oplus\F_qC_{d/2} .$$
Again  by Perlis Walker's Theorem  it follows that 
$$\F_qC_{d/2}\oplus\F_qC_{d/2}\oplus\F_qC_{d/2}\oplus\F_qC_{d/2} \cong \bigoplus_ {\lll \mid {d/2}} 4\cdot\frac{\phi(\lll)}{ord_{\lll}q}\F_q(\theta_{\lll}),$$
where the isomorphism can be explicitly determinated by
$$ \begin{array}{ccccc}
\psi_{\lll}:& \F_qH &\longrightarrow& \F_q(\theta_{\lll}) \oplus \F_q(\theta_{\lll}) \oplus \F_q(\theta_{\lll}) \oplus \F_q(\theta_{\lll})\\
&t&\longmapsto&(\theta_{\lll}, \theta_{\lll},\theta_{\lll}, \theta_{\lll})\\
&y& \longmapsto&(1,-1,\beta, -\beta)\end{array}$$
where,  for each irreducible factor of $x^{d/2}-1$,   we select  an arbitrary root $\theta_\lll$  of order $\lll$  and 
$\beta\in \F_q$ satisfies $\beta^2=-1$. 

The same way, this isomorphism generates a surjective homomorphism  $\F_qG\to  \F_q(\theta_{\lll}) \oplus \F_q(\theta_{\lll})$, 
defined by
$$ \begin{array}{ccccc}
\psi_{\lll}:& \F_qG &\longrightarrow& \F_q(\theta_{\lll}) \oplus \F_q(\theta_{\lll}) \oplus \F_q(\theta_{\lll}) \oplus \F_q(\theta_{\lll})\\
&t&\longmapsto&(\theta_{\lll}^{\frac{d+2}{4}}, \theta_{\lll}^{\frac{d+2}{4}},-\theta_{\lll}^{\frac{d+2}{4}}, -\theta_{\lll}^{\frac{d+2}{4}})\\
&y& \longmapsto&(1,-1,\beta, -\beta).\end{array}$$

{\em Case 3)}
 $s \equiv 3\pmod4$ and $q\equiv 3 \pmod 4$:
As at before case,  $H$  can be represented as
$$ H = \langle{t,y \mid t^{d/2} =1, y^4=1 \rangle},\quad \text{where}\quad  t = z^2 .$$
Since    $d/2$ is odd, then $\F_q(\theta)$ is  an  extension of odd degree for any $\frac{d}{2}$-th root of unit $\theta$, so  $-1$ is not a square  in $\F_q(\theta)$, consequently 
$$\F_q(C_4 \times C_{d/2}) \cong (\F_qC_4)C_2 \cong ( \F_q\oplus\F_q\oplus\F_{q^2})C_{d/2} \cong \F_qC_{d/2}\oplus\F_qC_{d/2}\oplus\F_{q^2}C_{d/2}.$$
$$ \hspace{2.5cm} \cong \displaystyle\bigoplus_ {\lll \mid {d/2}} 2\cdot\frac{\phi(\lll)}{ord_{\lll}q}\F_q(\theta_{\lll}) \oplus\frac{\phi(\lll)}{ord_{\lll}q^2}\F_q(\theta_{2\lll}) $$
where the isomorphism can be written explicitly as 
$$ \begin{array}{ccccc}
\psi_{\lll}:& \F_qH &\longrightarrow& \F_q(\theta_{\lll}) \oplus \F_q(\theta_{\lll}) \oplus \F_q(\theta_{\lll}, \beta) \\
&t&\longmapsto&(\theta_{\lll},\theta_{\lll}, \theta_{\lll})\\
&y& \longmapsto&(1,-1,\beta)\end{array}$$
where  $\theta_\lll$ is a root of $x^{d/2}-1$ of order $\lll$,  $\beta\in \F_{q^2}$ is a square root of $-1$. Therefore using  previous homomorphism  we  construct   a   homomorphism with domain  $\F_qG$  determined by the generators of $G$ as 
$$ \begin{array}{ccccc}
\psi_{\lll}:& \F_qG &\longrightarrow& \F_q(\theta_{\lll}) \oplus \F_q(\theta_{\lll}) \oplus \F_q(\theta_{\lll}, \beta) \\
&x&\longmapsto&(\theta_{\lll}^{\frac{d+2}{4}},\theta_{\lll}^{\frac{d+2}{4}},- \theta_{\lll}^{\frac{d+2}{4}})\\
&y& \longmapsto&(1,-1,\beta).\end{array}$$
\end{proof}

The components previously found
 correspond to the simple abelian components of the group algebra $\F_qG$. Thus, we now need to analyze the simple non-abelian  components of this group algebra, that we know  from the Weddenburn-Artin Theorem,  is direct sum of components that are isomorphic to  matrix  algebras  over some finite extension of $\F_q$.  
We will construct homomorphisms  from   
 $\F_qG$   to each of these components.

\begin{theorem} \label{teorema2}
Let  $G$ be a metacyclic group  with  presentation
$$ G = \langle{x,y \mid x^{2n}= 1, y^2 = x^n , xy =y x^s \rangle}$$
with  $s^{2} \equiv 1 \pmod {2n}$. Let  $d = \gcd(2n, s-1)$ and  $H$ be the subgroup $G$ defined in Lemma    \ref{parteabeliana}. 
Then  $\F_qG\cong \F_qH\oplus \mathcal L$, where  $\mathcal L$ can be written as  sum of simple components  of the form: 
$$ \mathcal L  \cong  \displaystyle  \bigoplus_{{1 \leq i \leq r } \atop f_i(x) \nmid (x^d-1)}A_i \oplus \bigoplus_{ {r+1 \leq i \leq r+t } \atop f_i(x) \nmid (x^d-1)} B_i ,$$
where  
$ A_i \cong M_2(\F_q(\xi_i+\xi_i^s,\xi_i^{s+1}))$, $B_i \cong M_2(\F_q(\xi_i)) $ and $\xi_i$ is a root of $f_i(x)$.
\end{theorem}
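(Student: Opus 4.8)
The plan is to mimic the structure of the proof of Theorem \ref{teorema1}, adapting it to the non-split presentation. Since $\F_qG \cong \F_qH \oplus \mathrm{Ker}(\Psi)$ and Lemma \ref{parteabeliana} already describes $\F_qH$ completely, the task reduces to identifying $\mathrm{Ker}(\Psi) \cong \mathcal L$ with the stated sum of matrix algebras. The essential point is that $\mathrm{Ker}(\Psi)$ is spanned by the images in $\F_qG$ of the central idempotents attached to the nontrivial characters of $G/G'$ together with everything ``orthogonal'' to $H$; concretely, I would again split a general element $u \in \F_qG$ as $u = P(x) + Q(x)y$ with $\deg P, \deg Q \le 2n-1$, observe that $\Psi(u) = 0$ forces $P$ and $Q$ to vanish at all roots of $x^d - 1$, i.e. $(x^d-1) \mid P(x)$ and $(x^d-1)\mid Q(x)$, so $\mathrm{Ker}(\Psi)$ has $\F_q$-dimension $2(2n - d) = 2\deg\bigl(\frac{x^{2n}-1}{x^d-1}\bigr)$.

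Next I would build, for each irreducible factor $f_i$ of $\frac{x^{2n}-1}{x^d-1}$ with root $\xi_i$, the homomorphism $\tau_i : \F_qG \to M_2(\F_q(\xi_i))$ by
$$\tau_i(x) = \begin{pmatrix} \xi_i & 0 \\ 0 & \xi_i^s\end{pmatrix}, \qquad \tau_i(y) = \begin{pmatrix} 0 & 1 \\ \xi_i^n & 0\end{pmatrix}.$$
The only new verification relative to Theorem \ref{teorema1} is the relation $y^2 = x^n$: one checks $\tau_i(y)^2 = \xi_i^n I = \tau_i(x)^n$ (using $\xi_i^{2n}=1$), while $\tau_i(x)^{2n} = I$ and $\tau_i(x)\tau_i(y) = \tau_i(y)\tau_i(x)^s$ go through exactly as before, using $s^2 \equiv 1 \pmod{2n}$. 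For the $s$-self-involutive factors ($1 \le i \le r$, $f_i \nmid x^d-1$) I would conjugate $\tau_i$ by the same matrix $Z_i = \begin{pmatrix} 1 & -\xi_i \\ 1 & -\xi_i^s\end{pmatrix}$ as in (\ref{comp_conj}); the image then lands in $M_2(\F_q(\xi_i + \xi_i^s, \xi_i^{s+1}))$, giving the $A_i$ component, while for $r+1 \le i \le r+t$ the pair $\tau_i, \tau_i\circ(\text{swap on }f_i^{*_s})$ fills out all of $M_2(\F_q(\xi_i))$, giving $B_i$. Assembling $\rho = \Psi \oplus \bigoplus \sigma_i\circ\tau_i \oplus \bigoplus \tau_i$ (with the abelian part $\Psi$ expanded via the $\psi_\ell$ of Lemma \ref{parteabeliana}), I would prove injectivity by the same evaluation argument: $\rho(u) = 0$ forces each $f_i$ and each $f_i^{*_s}$ to divide both $P$ and $Q$, hence $\frac{x^{2n}-1}{x^d-1}$ divides them, and combined with divisibility by $x^d-1$ from the abelian part we get $(x^{2n}-1)\mid P, Q$, so $u = 0$.

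Finally I would close the dimension count. By Lemma \ref{lema1}, $\dim_{\F_q} \F_q(\xi_i+\xi_i^s,\xi_i^{s+1}) = \frac12 \deg f_i$ for the $s$-self-involutive factors, so $\dim_{\F_q} A_i = 4\cdot\frac12\deg f_i = 2\deg f_i$, and $\dim_{\F_q} B_i = 4\deg f_i = 2(2\deg f_i)$, matching the contribution $2\deg f_i + 2\deg f_i^{*_s}$ of the paired factors. Summing, $\dim_{\F_q}\mathcal L \le 2\deg\bigl(\frac{x^{2n}-1}{x^d-1}\bigr) = 2(2n-d)$, which is exactly $\dim_{\F_q}\mathrm{Ker}(\Psi)$; together with injectivity of $\rho$ this forces $\rho$ to be an isomorphism onto $\F_qH \oplus \bigoplus A_i \oplus \bigoplus B_i$, and hence $\mathcal L \cong \mathrm{Ker}(\Psi)$ has the asserted form.

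The main obstacle I anticipate is getting the matrix $\tau_i(y)$ right so that simultaneously $\tau_i(y)^2 = \tau_i(x)^n$ and $\tau_i(x)\tau_i(y)=\tau_i(y)\tau_i(x)^s$ hold — the off-diagonal entry must be $\xi_i^n$ rather than $1$, and one must be slightly careful because $\xi_i^n \in \{1,-1\}$ depending on the factor, which is also what couples the non-split case to the three subcases of Lemma \ref{parteabeliana} on the abelian side. A secondary subtlety is checking that no simple component is double-counted: the factors $f_i$ dividing $x^d-1$ are precisely those already accounted for in $\F_qH$, which is why the index sets in $\mathcal L$ exclude them, and one should note that $x^{2n}-1 = (x^d-1)\cdot\frac{x^{2n}-1}{x^d-1}$ with the two factors coprime (as $\gcd(2n,q)=1$ makes $x^{2n}-1$ separable), so the decomposition is genuinely a direct sum with no overlap.
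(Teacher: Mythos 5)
Your overall architecture (split off $\F_qH$, build $2$-dimensional representations from roots of the factors of $\tfrac{x^{2n}-1}{x^d-1}$, prove injectivity by evaluation, close by a dimension count) matches the paper's, but there is a genuine gap at the one point where the non-split case actually differs from Theorem \ref{teorema1}: the identification of the components $A_i$ for the $s$-self-involutive factors with $\xi_i^n=-1$. You claim that conjugating $\tau_i$ by the \emph{same} matrix $Z_i=\left(\begin{smallmatrix}1 & -\xi_i\\ 1 & -\xi_i^s\end{smallmatrix}\right)$ of (\ref{comp_conj}) lands the image in $M_2(\F_q(\xi_i+\xi_i^s,\xi_i^{s+1}))$. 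This is false when $\tau_i(y)=\left(\begin{smallmatrix}0&1\\ -1&0\end{smallmatrix}\right)$: a direct computation gives
$$Z_i^{-1}\left(\begin{array}{cc}0&-1\\ 1&0\end{array}\right)Z_i=\frac{1}{\xi_i-\xi_i^s}\left(\begin{array}{cc}\xi_i+\xi_i^s & -(\xi_i^2+\xi_i^{2s})\\ 2 & -(\xi_i+\xi_i^s)\end{array}\right),$$
and entries such as $\tfrac{2}{\xi_i-\xi_i^s}$ are sent to their negatives by the Frobenius $\xi_i\mapsto\xi_i^s$ generating $\mathrm{Gal}(\F_q(\xi_i)/\F_q(\xi_i+\xi_i^s,\xi_i^{s+1}))$, hence do not lie in the half-degree subfield (the cancellation $\xi_i^{2s}-\xi_i^2=(\xi_i^s-\xi_i)(\xi_i^s+\xi_i)$ that saves the split case has no analogue here). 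Without this containment your bound $\dim_{\F_q}A_i\le 2\deg f_i$ is unproved, the total dimension estimate exceeds $4n$, and the surjectivity/identification argument for those components collapses.

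Repairing this is exactly the bulk of the paper's proof and it is not a one-line fix. The paper conjugates instead by a matrix involving $\beta$ with $\beta^2=-1$, which puts the generators in $M_2(\F_q(\xi_i+\xi_i^s,\xi_i^{s+1},\beta))$, and then must decide whether $\beta$ lies in $\F_q(\xi_i+\xi_i^s,\xi_i^{s+1})$: this holds if $q\equiv 1\pmod 4$, and also if $q\equiv 3\pmod 4$ with $\nu_2(n)>\nu_2(q+1)$, using Lemma \ref{Lema da extensao} to show $4\mid[\F_q(\xi_i):\F_q]$ so the subfield has even degree. In the remaining case $q\equiv 3\pmod 4$, $\nu_2(n)\le\nu_2(q+1)$, the subfield $\F_q(\xi_i+\xi_i^s,\xi_i^{s+1})=\F_{q^l}$ has odd degree, $\beta\notin\F_{q^l}$, and the paper needs a genuinely different device: an element $\theta_i$ with $\theta_i^{2^{\nu_2(s+1)-\nu_2(n)}}=\xi_i$ (after possibly replacing $s$ by $s+2n$), a conjugating matrix with entries $a,b$ satisfying $a^2=-\theta_i$, $b^2=\theta_i^s$, an order argument showing $ab\in\F_{q^l}$, and the Fundamental Theorem of Symmetric Functions to see that the conjugated entries lie in $\F_q(\theta_i+\theta_i^s,\theta_i^{s+1})=\F_q(\xi_i+\xi_i^s,\xi_i^{s+1})$. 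Your proposal acknowledges that $\xi_i^n=\pm1$ requires ``care'' but supplies none of this case analysis, so the key claim $A_i\cong M_2(\F_q(\xi_i+\xi_i^s,\xi_i^{s+1}))$ is left without proof precisely where it is hardest.
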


\begin{proof}
Since $\frac{1+x^n}{2}$ and  $\frac{1-x^n}{2}$  are central orthogonal idempotents of  $\F_qG$, we have that
$$ \F_qG \cong \F_qG \left(\frac{1+x^n}{2}\right) \oplus \F_qG \left(\frac{1-x^n}{2}\right).$$
In addition, in  $\F_qG \left(\frac{1+x^n}{2}\right)$ the element  $\overline{x}$ satisfies that  $\overline{x}^n = \overline{1},$ hence this component is isomorphic to  the group algebra generated by the group of  Theorem \ref{teorema1}.
Therefore, we only need to determine the non-abelian components of  $\F_qG \left(\frac{1-x^n}{2}\right)$ and these components are generated by the irreducible factors of $x^n+1$ such that do not divide $x^d-1$. 

Let  $f_i$ be an irreducible factor of $x^n+1$ such that $\gcd(f_i, x^d-1)=1$  and   $\xi_i$ be any root of $f_i$. 
Let consider the natural homomorphism generated by the representation of $G$ associated to  $\xi_i$, i.e. 
 $$ \begin{array}{cccc} \omega_i : & \F_q G & \longrightarrow & M_2(\F_q(\xi_i)) \\
& {x} & \longmapsto & \left( \begin{array}{cc} \xi_i & 0 \\ 0 & \xi_i^s \end{array}\right) \\
& {y} & \longmapsto & \left( \begin{array}{cc} 0 & -1 \\ 1 & 0 \end{array}\right)  \end{array} $$ 

By straightforward verification we know that  $\omega_i({x})^n = I$ and $ \omega_i({x})\omega_i({y}) = \omega_i({y})\omega_i({x})^s$, thereby  $\omega_i$ is a well defined homomorphism. In general, these homomorphisms are not surjective and we need to find an appropriate isomorphism such that composed with $\omega_i$ shows clearly what is the image of this composition. In fact,  for each $s$-self-involutive factor $f_i$   $1 \leq i \leq r$, let us define 
 $Z_i = \left( \begin{array}{cc} -\xi_i^s & \beta \\ \beta\xi_i & 1 \end{array}\right)$, where  $\beta$ is an element of  $\overline{\F}_q$  such that   $\beta^2 = -1$. Then 
$$  \begin{array}{cccc} \eta_i: & M_2(\F_q(\xi_i)) & \longrightarrow & M_2(\F_q(\xi_i,\beta)) \\ & X & \longmapsto& Z_i^{-1 } X Z_i \end{array}$$
is an automorphism such that 
$$ \eta_i\circ\omega_i(\overline{y}) = \left( \begin{array}{cc} -\beta & 0 \\ -(\xi_i + \xi_i^s) & \beta \end{array}\right) \quad \hbox{and } \quad \eta_i\circ\omega_i(\overline{x}) = \left( \begin{array}{cc} 0 & \beta \\ \xi_i^{s+1} & \xi_i+\xi_i^s \end{array}\right)$$
thus the image of the generators are in  $\F_q( \xi_i+\xi_i^s , \xi_i^{s+1}, \beta)$.
\\
We claim that, in the case that  $ q \equiv 1 \pmod 4,$ or $q \equiv 3 \pmod 4$ and $\nu_2(n) > \nu_2(q+1) $, the field  
$ \F_q( \xi_i+\xi_i^s, \xi_i^{s+1}, \beta)$ is a proper subfield of $\F_q(\xi_i)$. 
Indeed by Lemma  \ref{lema1}  we have that $$( \F_q(\xi_i) : \F_q(\xi_i +\xi_i^s, \xi_i^{s+1}) ) = 2.$$
In addition, if  $q \equiv 1 \pmod4$ then  $\beta \in \F_q$, or if  $q \equiv 3 \pmod 4$ and $\nu_2(n) > \nu_2(q+1)$ it follows from Lemma \ref{Lema da extensao}  that  $[\F_q(\xi_i) : \F_q] = l$  is divisible by 4.  Therefore $[ \F_q(\xi_i +\xi_i^s, \xi_i^{s+1}) : \F_q ] = \frac{l}{2}$ is even and $\beta \in \F_q(\xi_i +\xi_i^s, \xi_i^{s+1}).$
In any of these cases,  $\eta_i\circ\omega_i$ is an homomorphism such that  the image  is contained in  $M_2(\F_q(\xi_i+\xi_i^s, \xi_i^{s+1})).$

The last case to consider is when 
 $q \equiv 3 \pmod 4$ and $\nu_2(n) \leq \nu_2(q+1)$. Let  $f_i$ be a  $s$-self-involutive irreducible factor of $x^n+1$ such that  $f_i$ does not divide $x^d-1$ and  $\xi_i$ be a root of  $f_i$.
By Lemma  \ref{Lema da extensao} we know that 
$$ [ \F_q(\xi_i) : \F_q] = 2l \quad \text{with $l$ odd and }\quad [ \F_q(\xi_i^{2^{\nu_2(n)+1}}) : \F_q] = l.  $$
In addition, using the fact that $f_i$ is  $s$-self-involutive we also have that  
$$[ \F_q(\xi_i) : \F_q(\xi_i+\xi_i^s, \xi_i^{s+1})] = 2.$$
Thus
$$\F_{q^l} = \F_q(\xi_i^{\nu_2(n)+1}) = \F_q(\xi_i+\xi_i^s, \xi_i^{s+1})$$
and since 
$$(\xi_i^{s+1})^{(q^l-1)} = 1 \quad \text{and}\quad \nu_2(q^l-1) =  1$$
it follows that  $\nu_2(n) \leq \nu_2(s+1).$

Now we can suppose without loss of generality that  $\nu_2(s+1) \leq \nu_2(q+1)+1$,  because if necessary  we can change $s$ by  $s+2n$ and every conditions remain valid. In this case, there exists 
 $\theta_i \in \F_q(\xi_i) = \F_{q^{2l}}$ such that  $$\theta_i^{2^{\nu_2(s+1)-\nu_2(n)}} = \xi_i$$
and this element has order  $$2^{{\nu_2(s+1)-\nu_2(n)}}n = 2^{\nu_2(s+1)+1}m \quad \text{ with $m$ odd.}$$
Therefore $\theta$ is an element of $\F_q(\xi_i)$ that is not  a square in this field.
Moreover, from 
$$ s+1 \equiv 0 \pmod{2^{\nu_2(s+1)}} \quad \text{and}\quad q^l+1 \equiv 0 \pmod{2^{\nu_2(s+1)}}$$
we have 
$$ s \equiv q^l \pmod{2^{\nu_2(s+1)}} \quad \text{and}\quad s \equiv q^l \pmod {m2^{\nu_2(s+1)}}.$$
Consequently  $\theta_i$  is a root of a  $s$-self-involutive polynomial and 
$$ [\F_q(\theta_i) : \F_q(\theta_i + \theta_i^s, \theta_i^{s+1})] = 2.$$
Let $a, b$ be elements of  $\overline \F_q$ such that $a^2=-\theta_i$ and $b^2=\theta_i^s$ and $Z_i$ be the matrix defined as 
$$ Z_i = \left( \begin{array}{cc} a & b \\ -\xi_ia & -\xi_i^sb \end{array}\right). $$
Using this matrix, we can defined a conjugation over the image of $\omega_i$ determined by  the generators 
\begin{equation}\label{conj_X}
Z_i \cdot \left( \begin{array}{cc} \xi_i & 0 \\ 0 & \xi_i^s \end{array} \right) \cdot Z_i^{-1} = \left( \begin{array}{cc} 0 & -1 \\ \xi_i^{s+1} & \xi_i+\xi_i^{s} \end{array} \right)
\end{equation}
and
\begin{equation}\label{conj_Y}
Z_i \cdot \left( \begin{array}{cc} 0 &-1 \\ 1 & 0 \end{array} \right) \cdot Z_i^{-1} = \frac{1}{ab}\left( \begin{array}{cc} \dfrac{\theta_i\xi_i - (\theta_i\xi_i)^s}{\xi_i-\xi_i^s} &  \dfrac{\theta_i - \theta_i^s}{\xi_i-\xi_i^s} \\ \dfrac{(\xi_i^2\theta_i)^s-\xi_i^2\theta_i}{\xi_i-\xi_i^s} &  -\dfrac{\theta_i\xi_i - (\theta_i\xi_i)^s}{\xi_i-\xi_i^s}  \end{array} \right).  
\end{equation}
Clearly  the matrix in (\ref{conj_X}) has coefficients in $$\F_q(\theta_i+\theta_i^s, \theta_i^{s+1}) = \F_q(\xi_i+\xi_i^s, \xi_i^s) = \F_q(\xi_i^{2^{\nu(n)+1}}).$$
We claim that the  matrix in (\ref{conj_Y}) also has coefficients in this field.  Since  $(ab)^2 = -\theta_i^{s+1}$, in order to prove that $ab\in \F_{q^l}$  it is enough to show that  $-\theta_i^{s+1}$ is a square in that field. Indeed
$$(-\theta_i^{s+1})^{\frac{q^l-1}{2}} = -\theta_i^{(s+1)(\frac{q^l-1}{2})} \quad \text{and}\quad \nu_2\left((s+1)\left(\frac{q^l-1}{2}\right)\right) = \nu_2(s+1).$$
By  construction  $\theta_i$ has order $2^{\nu_2(s+1)+1}m$, thus $\theta_i^{(s+1)(\frac{q^l-1}{2})} \neq 1$.  Since $-1$  is not a square, therefore $-\theta_i^{s+1}$ is a square  in  $\F_q(\theta_i+\theta_i^s, \theta_i^{s+1}).$
Finally, each one of matrix entries  is of the form 
$$ G_{c,d}(w,z) :=\pm \frac{w^c-z^c}{w^d-z^d},$$
where  $w=\theta_i$, $z=\theta_i^s$  and  $c$ and  $d$ are appropriate integers.
Since $G_{c,d}(x,y) = G_{c,d}(y,x)\in \F_q(x,y)$, by the Fundamental Theorem of Symmetric Functions (see \cite[Theorem 2.20]{Hung}), there exists  $\widetilde{G}_{c,d}\in \F_q(x,y)$ quotient of polynomials such that $G_{c,d}(x,y) = \widetilde{G}_{c,d}(x+y, xy) $ and this identity proves that
each entry of the matrix is in the field $\F_q(\theta_i + \theta_i^s, \theta_i^{s+1})$.

To finish the proof, let us consider the homomorphism $\Lambda:\F_qG \rightarrow  \F_q H\oplus \mathcal L$ defined as the direct sum of the homomorphism
 $\F_qG \rightarrow \F_q H$  found     in Lemma \ref{parteabeliana}   with  the homomorphism $\F_qG \rightarrow \mathcal L$ found in Theorem  \ref{teorema2}, i.e., 
$$  \begin{cases} 
\eta_i \circ \omega_i & \text{if}\quad 1 \leq i \leq r \quad \hbox{and}\quad f_i \nmid x^d-1 \\
\omega_i & \text{if}\quad r+1 \leq i \leq r+t \quad \text{and}\quad f_i \nmid x^d-1 \end{cases}$$
Following the same steps of the proof of Theorem  \ref{teorema1}, it is easy to prove that if  $u = P(x)+Q(x)y$ is in the kernel of   $\Lambda$,  then  $P(x)$ and  $Q(x)$  are divisible by  $x^{2n}-1$, therefore   $P(x) \equiv 0$ e $Q(x) \equiv 0$  and consequently  $\Lambda$ is injection.  Calculating  the dimension of $\F_q H\oplus \mathcal L$ as a vector space over $\F_q$, we have
\begin{align*}
4n \leq &\dim_{\F_q}\Bigl(\displaystyle \F_qH \oplus\bigoplus_{{1 \leq i \leq r }\atop f_i(x)\nmid (x^d-1)}A_i  \oplus\bigoplus_{ {r+1 \leq i \leq r+t } \atop f_i(x) \nmid (x^d-1)}B_i\Bigr) \\
&
\leq 4d  + 4\cdot \hspace{-4mm}\sum_{i=1\atop f_i(x)\nmid (x^d-1)}^{r}\dim_{\F_q}( \F_q( \xi_i+\xi_i^s , \xi_i^{s+1})) 
 + 4\cdot \hspace{-4mm}\sum_{i=r+1\atop f_i(x)\nmid (x^d-1)}^{r+t}\dim_{\F_q}( \F_q( \xi_i)) 
\\
 &=  4d + 4\cdot\displaystyle\sum_{i=1\atop f_i(x)\nmid (x^d-1)}^{r}\frac{deg(  f_i)}{2} 
 + 4\cdot\displaystyle\sum_{i=r+1\atop f_i(x)\nmid (x^d-1)}^{r+t}deg(f_i) 
\\
& =  4 \left( d + deg\left(\frac{x^n+1}{x^d-1}\right)\right) = 4n, \end{align*}
which is  the dimension of $\F_qG$ over  $\F_q$, hence it proves that  $\Lambda$ is an isomorphism.\qed
%

\end{proof}

\begin{remark}
In order to determine the number of idempotents of $\F_qG$ we do not need to know the explicit factorization of the polynomial   $x^{2n}-1$ over  $\F_q[x]$, actually we only need to know how many simple component have 
 $\F_qG$ and to determine this number we observe that the polynomial  $x^{2n}-1$ can be factorize of the following form 
$$ x^{2n}-1 = \prod_{m\mid2n} \Phi_m(x),$$
where  $\Phi_m(x)$ is the cyclotomic polynomial of order $m$ and, from Theorem  2.47 in \cite{LiNi}, 
$\Phi_m(x)$ splits into $\phi(m)/d$ irreducible factors of degree $d=ord_m q$.  We observe that  the $s$-self-involutive  propriety depends only of the degree,  specifically  one factor  of order $m$ and degree $d$ is   $s$-self-involutive if and only if  $s \equiv q^{d/2} \pmod m$. \end{remark}

\section{Central Primitive Idempotents}
In this section, we show explicitly an expression  for every central primitive idempotent of the group algebra  $\F_qG$, where $G$ is a group that have presentation of the forms (\ref{representationD}) and (\ref{representationG}). Before we show the expression of the idempotents, we present  the form of the idempotents  when the group is cyclic. 

\begin{theorem}\cite[Theorem 2.1]{Bro2}\label{idempotente} 
Let  $\F_q$ be a finite field with $q$ elements and $n \in \N^*$ such that $\gcd(q,n)=1$, then each primitive idempotent of  $\F_qC_n$ is of the form:
$$ e_{d,j}(x) = \frac{x^n-1}{f_{d,j}(x)}h_{d,j}(x),$$
where $f_{d,j}$ is an irreducible factor of the cyclotomic polynomial  $\Phi_d(x)$, with $d$  a  divisor of $n$ and $h_{d,j}(x) \in \F_q[x]$  the polynomial of degree $\deg(h_{d,j}(x)) =
ord_dq$ that is the inverse of  $\frac{x^n-1}{f_{d,j}(x)}$ in the field $\frac{\F_q[x]}{\langle{f_{d,j}}\rangle}$.
\end{theorem}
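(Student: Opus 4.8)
The plan is to pass to the polynomial model $\F_q C_n\simeq\F_q[x]/\langle x^n-1\rangle$ and apply the Chinese Remainder Theorem. Since $\gcd(q,n)=1$, the polynomial $x^n-1$ is separable, so $x^n-1=\prod_{d\mid n}\Phi_d(x)=\prod_{d\mid n}\prod_j f_{d,j}(x)$ is a product of \emph{pairwise distinct} monic irreducibles, where for each fixed $d$ the $\phi(d)/\ord_d q$ factors $f_{d,j}$ all have degree $\ord_d q$. By CRT one gets a ring isomorphism $\F_q[x]/\langle x^n-1\rangle\simeq\bigoplus_{d,j}\F_q[x]/\langle f_{d,j}(x)\rangle$, and every summand is a field because $f_{d,j}$ is irreducible. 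In a finite direct sum of fields the primitive (equivalently, minimal) idempotents are exactly the tuples with a single entry $1$ and $0$ elsewhere; so it suffices to identify, for each pair $(d,j)$, the element of $\F_q[x]/\langle x^n-1\rangle$ mapped to the tuple carrying $1$ in the $(d,j)$ coordinate and $0$ in all others, and then to transport it back to $\F_q C_n$.

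First I would check that $h_{d,j}$ is well defined. Write $g_{d,j}(x):=\frac{x^n-1}{f_{d,j}(x)}$ for the cofactor. Because the $f_{d,j}$ are pairwise distinct irreducibles, $f_{d,j}$ is coprime to $g_{d,j}$, so $g_{d,j}$ is a unit in the field $\F_q[x]/\langle f_{d,j}(x)\rangle$ and $h_{d,j}$ is precisely the representative of its inverse there of degree less than $\ord_d q=\deg f_{d,j}$. With this in hand the two congruences pinning down $e_{d,j}=g_{d,j}h_{d,j}$ are immediate: modulo $f_{d,j}$ one has $e_{d,j}\equiv g_{d,j}h_{d,j}\equiv 1$, whereas for any other factor $f_{d',j'}$ one has $f_{d',j'}\mid g_{d,j}$ and hence $e_{d,j}\equiv 0\pmod{f_{d',j'}}$. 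Thus the CRT isomorphism sends $e_{d,j}$ to the $(d,j)$-th standard idempotent; in particular $e_{d,j}^2=e_{d,j}$, the $e_{d,j}$ are pairwise orthogonal, $\sum_{d,j}e_{d,j}=1$, and each $e_{d,j}$ is primitive because $e_{d,j}\bigl(\F_q[x]/\langle x^n-1\rangle\bigr)\simeq\F_q[x]/\langle f_{d,j}(x)\rangle$ is a field, hence admits no nontrivial idempotent. Pulling everything back along $\F_q C_n\simeq\F_q[x]/\langle x^n-1\rangle$ produces the $e_{d,j}(x)$ of the statement, and since their number equals the number of simple components this list is exhaustive.

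The only genuinely delicate point is the bookkeeping behind the two congruences, namely that the $f_{d,j}$ are distinct over \emph{all} pairs $(d,j)$ — this is exactly where separability of $x^n-1$ enters, and it is what forces $g_{d,j}$ to be divisible by every irreducible factor other than $f_{d,j}$. A minor point worth one sentence is the normalization of $h_{d,j}$: it is only determined modulo $\langle f_{d,j}\rangle$, so one fixes the representative of degree below $\ord_d q$, which makes $\deg e_{d,j}<n$ and hence $e_{d,j}$ the canonical representative in $\F_q C_n$; any other choice changes $g_{d,j}h_{d,j}$ by a multiple of $x^n-1$ and so not at all in $\F_q C_n$. Everything else reduces to a routine CRT computation.
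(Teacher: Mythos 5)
Your argument is correct and complete: since the paper states this result only as a citation to \cite{Bro2} and gives no internal proof, there is nothing to diverge from, and your CRT computation is the standard argument, matching the decomposition of $\F_qC_n$ recalled in (\ref{decom_ciclico}) of the preliminaries. The only point worth flagging is one you already caught: $h_{d,j}$ is determined only modulo $f_{d,j}$, and its canonical representative has degree \emph{strictly less than} $\ord_d q=\deg f_{d,j}$, so the statement's ``$\deg(h_{d,j})=\ord_d q$'' should be read as the bound $\deg(h_{d,j})<\ord_d q$ (equivalently $\le \ord_d q-1$); with that normalization your verification that $e_{d,j}$ maps to the $(d,j)$-th standard idempotent under CRT, and hence is primitive, is exactly right.
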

We note that if we know the explicit form of the polynomial $f_{d,j}$, then the polynomial  $h_{d,j}$  can be calculate using the Extended Euclidean Algorithm for polynomials.

\begin{lemma}[\cite{Bro2}, Lemma 2.1] \label{Lemma_Id_Cn}
Let  $ \mathcal{I} \subset \frac{\F_q[x]}{\langle{x^n-1}\rangle}$ be an ideal generated by the monic polynomial  $g$ that is a  divisor of  $x^n-1$ and let  $ f = \frac{x^n-1}{g}$.  Then the 
 idempotent of  $\mathcal{I}$ is 
$$ e_f := -\frac{((f^*)')^*}{n}\cdot \frac{x^n-1}{f}$$
where  $f'(x)$ and  $f^*(x)$ denote  respectively the formal derivation and the reciprocal of the polynomial $f(x)$.
\end{lemma}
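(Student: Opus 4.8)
The plan is to pin down the idempotent generator of $\mathcal I$ via the Chinese Remainder Theorem and then verify that the displayed expression meets that characterization. Since $\gcd(n,q)=1$, the polynomial $x^n-1$ is separable, so $\gcd(f,g)=1$ and reduction gives $R:=\F_q[x]/\langle x^n-1\rangle\cong \F_q[x]/\langle f\rangle\times\F_q[x]/\langle g\rangle$. Under this isomorphism $\mathcal I=\langle g\rangle$ maps to $\F_q[x]/\langle f\rangle\times\{0\}$ (because $g$ is a unit modulo $f$ and is $0$ modulo $g$), so its idempotent generator $e_f$ is the unique element of $R$ with $e_f\equiv 1\pmod f$ and $e_f\equiv 0\pmod g$, and its reduced representative has degree $<n$. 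It therefore suffices to show that $w:=-\dfrac{((f^*)')^*}{n}\cdot\dfrac{x^n-1}{f}=-\dfrac{((f^*)')^*}{n}\,g$ has degree $<n$ and satisfies these two congruences. The congruence $w\equiv 0\pmod g$ is immediate since $g\mid w$, and the degree bound follows from $\deg\big(((f^*)')^*\big)\le\deg f-1$ together with $\deg g=n-\deg f$.

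For the congruence $w\equiv 1\pmod f$, I would first differentiate $x^n-1=f(x)g(x)$ to get $nx^{n-1}=f'(x)g(x)+f(x)g'(x)$, hence $nx^{n-1}\equiv f'(x)g(x)\pmod f$. Because $f\mid x^n-1$ we have $f(0)\neq 0$, so $x$ is invertible modulo $f$ with $x^{-1}\equiv x^{n-1}$; multiplying the previous congruence by $x$ and using $x^n\equiv 1\pmod f$ yields $xf'(x)g(x)\equiv n\pmod f$, i.e.\ $\dfrac1n\,xf'(x)g(x)\equiv 1\pmod f$ (note $n$ is a unit in $\F_q$ since $p\nmid n$).

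The key remaining ingredient is the polynomial identity
$$((f^*)')^*(x)=(\deg f)\,f(x)-xf'(x),$$
where the reciprocal of $(f^*)'$ is taken with respect to the formal degree $\deg f-1$; this choice is legitimate because $f^*(0)=1$, so $\deg f^*=\deg f$. Writing $f(x)=\sum_{i=0}^m a_i x^i$ with $m=\deg f$ and $a_m=1$, one has $f^*(x)=\sum_{i=0}^m a_{m-i}x^i$; differentiating, reversing, and reindexing reorganizes the result into the two telescoping sums $\sum_{i<m}a_ix^i=f-x^m$ and $\sum_{i<m}ia_ix^i=xf'-mx^m$, which give the stated identity. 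Granting it, $((f^*)')^*\equiv -xf'\pmod f$, so $w=-\dfrac{((f^*)')^*}{n}\,g\equiv\dfrac1n\,xf'g\equiv 1\pmod f$. Combined with the congruence mod $g$ and the degree bound, this identifies $w$ with $e_f$, which is exactly the asserted formula since $g=(x^n-1)/f$. I expect the coefficient bookkeeping in this last identity to be the only delicate point, essentially because one must fix and respect a single degree convention for the reciprocal throughout.
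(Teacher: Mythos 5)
Your proposal is correct. Note that the paper itself offers no proof of this lemma: it is quoted verbatim from \cite{Bro2} (Lemma 2.1 there), so there is no internal argument to compare against; your CRT-based verification is a legitimate self-contained substitute. The two congruences you reduce to ($e_f\equiv 1 \pmod f$, $e_f\equiv 0\pmod g$, together with the degree bound) are exactly the right characterization of the idempotent generator of $\langle g\rangle$, and the key identity $((f^*)')^*=(\deg f)\,f-xf'$ checks out with the formal-degree-$(\deg f-1)$ convention for the reciprocal of $(f^*)'$: writing $f=\sum_{i=0}^m a_ix^i$ one gets $((f^*)')^*=\sum_{i=0}^{m-1}(m-i)a_ix^i=m(f-x^m)-(xf'-mx^m)=mf-xf'$, hence $((f^*)')^*\equiv -xf'\pmod f$ and $-\tfrac1n((f^*)')^*g\equiv\tfrac1n xf'g\equiv 1\pmod f$ by your differentiation of $x^n-1=fg$. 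You are also right that the reciprocal convention is the one delicate point, since $(f^*)'$ can have degree strictly less than $\deg f-1$ when $\cha(\F_q)$ divides $\deg f$. One small repair: the reason $\deg f^*=\deg f$ is that the constant term $f(0)$ is nonzero (which holds because $f\mid x^n-1$), not that $f^*(0)=1$; the latter is the statement that $f$ is monic and is irrelevant to the degree of $f^*$. This is a misattributed justification, not a gap in the argument.
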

%
%
%
\begin{theorem}\label{qtdidempotente}
Let $G$  be a metacyclic group with the following presentation 
$$ G = \langle{x,y \mid x^n= 1 = y^2 , y^{-1}xy = x^s \rangle}$$
where $s^2 \equiv 1 \pmod n$ and let  $d = \gcd(n, s-1)$.
The central idempotents of the  group algebra  $\F_qG$ are of the following form:
\begin{enumerate}[(1)]
\item  For each irreducible divisor  $f_j$ of $x^d-1$, there are two primitive idempotents of the form            $\frac{1+y}{2}e_{f_j}$ and  $\frac{1-y}{2}e_{f_j}$.
\item For each $s$-self-involutive irreducible  factor $f_j$ of $ \frac{x^n-1}{x^d-1}$, there exists one primitive idempotent of the form  $e_{f_j}$.
\item For each pair of non $s$-self-involutive irreducible factors $f_j,f_j^{*_s}$ of   $\frac{x^n-1}{x^d-1}$, there exists one primitive idempotent of the form $e_{f_j} + e_{f_j^{*_s}}$.
\end{enumerate}
\end{theorem}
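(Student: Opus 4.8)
The plan is to obtain the central primitive idempotents as the $\rho$-preimages of the identities of the simple summands, where $\rho$ is the explicit isomorphism built in the proof of Theorem \ref{teorema1}. For every irreducible factor $f_j$ of $x^n-1$ let $e_{f_j}\in\F_qC_n\subseteq\F_qG$ be the primitive idempotent of $\F_qC_n\cong\F_q[x]/\langle x^n-1\rangle$ attached to $f_j$ by Theorem \ref{idempotente} (equivalently by Lemma \ref{Lemma_Id_Cn}). Since $\gcd(n,q)=1$ the polynomial $x^n-1$ is separable, so the evaluation map $\F_q[x]/\langle x^n-1\rangle\hookrightarrow\prod_{\zeta^n=1}\overline\F_q$, $P\mapsto(P(\zeta))_\zeta$, is injective and sends $e_{f_j}$ to the tuple that equals $1$ at every root of $f_j$ and $0$ at the remaining $n$-th roots of unity; all evaluations below are read off from this picture.

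First I would check that the listed elements are idempotents and are central in $\F_qG$. Conjugation by $y$ acts on $\F_q\langle x\rangle$ by $x\mapsto x^s$, hence carries $e_{f_j}$ to the idempotent attached to $f_j^{*_s}$; so $e_{f_j}$ is central in $\F_qG$ whenever $f_j=f_j^{*_s}$, and $e_{f_j}+e_{f_j^{*_s}}$ is central for a non-self-involutive pair. If $f_j\mid x^d-1$, every root $\zeta$ of $f_j$ satisfies $\zeta^d=1$ and, since $d\mid s-1$, also $\zeta^s=\zeta$; from the embedding above this yields $(x^s-x)\,e_{f_j}=0$ and $y\,e_{f_j}=e_{f_j}\,y$ in $\F_qC_n$, from which $\tfrac{1\pm y}{2}e_{f_j}$ is seen to commute with $x$ and with $y$, and it is idempotent because $\tfrac{1\pm y}{2}$ is and it commutes with the central idempotent $e_{f_j}$.

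Next I would evaluate $\rho$ on each candidate and show its image is the identity of a single simple summand. On a matrix block the relevant map carries $e_{f_j}(x)$ to $\mathrm{diag}(e_{f_j}(\xi_i),e_{f_j}(\xi_i^s))$ (the conjugations $\sigma_i$ of Theorem \ref{teorema1} fix scalar matrices and $0$, so they do not interfere): this is $I$ on the block $A_j$ if $f_j$ is $s$-self-involutive, is $0$ on any block with $i\ne j$, and is $0$ on every abelian block because $f_j\nmid x^d-1$ there forces $e_{f_j}(\theta_g)=0$; for a non-self-involutive pair, $\rho(e_{f_j}+e_{f_j^{*_s}})$ equals $I$ on the block $B_j$ and $0$ elsewhere. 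On an abelian block, $\psi_g(e_{f_j}(x))=(e_{f_j}(\theta_g),e_{f_j}(\theta_g))$ is $(1,1)$ exactly when $g=f_j$ and $(0,0)$ otherwise, while $\psi_{f_j}(\tfrac{1+y}{2})=(1,0)$ and $\psi_{f_j}(\tfrac{1-y}{2})=(0,1)$; hence $\rho(\tfrac{1\pm y}{2}e_{f_j})$ is the identity of one of the two copies of $\F_q(\theta_{f_j})$ and $0$ on all other components. Thus each of the three families consists of primitive central idempotents.

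Finally I would argue completeness: the displayed elements are pairwise orthogonal (distinct $e_f$'s multiply to $0$, and $\tfrac{1+y}{2}\cdot\tfrac{1-y}{2}=0$), and summing $\tfrac{1+y}{2}e_{f_j}+\tfrac{1-y}{2}e_{f_j}=e_{f_j}$ over $f_j\mid x^d-1$, together with $e_{f_j}$ over the $s$-self-involutive factors and $e_{f_j}+e_{f_j^{*_s}}$ over the non-self-involutive pairs of $\tfrac{x^n-1}{x^d-1}$, recovers $\sum_{f\mid x^n-1}e_f=1$; moreover their number equals the count of simple components in Theorem \ref{teorema1}, so this is exactly the complete set of primitive central idempotents. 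I expect the centrality verification to be the only delicate point — precisely the identity $(x^s-x)e_{f_j}=0$ for $f_j\mid x^d-1$ and the bookkeeping that $x\mapsto x^s$ permutes the $e_{f_j}$ according to the $s$-involution; once that is in place, the remainder is just evaluation of the explicit maps $\psi_g$ and $\tau_i$.
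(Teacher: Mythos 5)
Your proposal is correct and follows essentially the same route as the paper: both identify the primitive central idempotents as the $\rho$-preimages of the block identities in the decomposition of Theorem \ref{teorema1}, using the evaluations of the maps $\psi_g$, $\tau_i$ (and the conjugations $\sigma_i$, which fix scalars) on the cyclic idempotents $e_{f_j}$, together with the splitting $\frac{1\pm y}{2}e_{f_j}$ in the abelian blocks. The only difference is one of direction — you verify the listed candidates and then argue completeness, while the paper starts from an arbitrary primitive central idempotent $e=P(x)+Q(x)y$ with $\rho(e)=(0,\dots,I,\dots,0)$ and deduces $Q=0$ and $P=e_{f_j}$ or $e_{f_j}+e_{f_j^{*_s}}$ — which is an organizational rather than a mathematical difference.
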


\begin{proof}
From Theorem  \ref{teorema1}, we know that the homomorphism 
$$ \rho : \F_qG \longrightarrow  \displaystyle  \bigoplus_{\gamma \mid d}2\cdot\frac{\phi(\gamma)}{ord_{\gamma}q}\F_q( \theta_{\gamma})  \oplus  \bigoplus_{ {1 \leq i \leq r } \atop f_i(x) \nmid (x^d-1)} A_i  \oplus \bigoplus_{ {r+1 \leq i \leq r+t } \atop f_i(x) \nmid (x^d-1)} B_i $$
is injective. 

Firstly we consider the idempotents of the non-abelian components, then  the image of a central primitive idempotent is an element of the form $ (0,0, \dots, I, 0, \dots, 0)$, i.e.  the image is zero on each component, except  in one where the image is the identity  $I$ of that component, i.e. for each primitive idempotent $e$ there exists $i$  such that   $\rho_j(e) = \delta_{i,j}I_j$ for all $j$,  where   $I_j$ is the identity of the component $j$ and $\delta_{i,j}=\begin{cases} 0, &\text{if $i\ne j$}\\ 1, &\text{if $i= j$}\end{cases}$.

Let $e = P(x) + Q(x)y$ be a presentation of $e$, where  $P(x)$ and $Q(x)$ are polynomials  in  $\F_q[x]$ of degree less or equal to $n-1$.  Since  $\rho(e) = I$, we have that $Q(\alpha_i) = 0$ for every  $\alpha_i$  root of $\dfrac{x^n-1}{x^d-1}$, hence  $Q(x)$ is divisible by this polynomial.   We claim that $Q(x)$ is also divisible by
$x^d-1$ and then it is  null polynomial.   At the   abelian components  we have that $P(\theta_j)+Q(\theta_j)=0$ and  $P(\theta_j)-Q(\theta_j)=0$  for any $\theta_j$ root of $x^d-1$, therefore $Q(x)$ is also divisible by $x^d-1$. 

Moreover, $P(x) = 1$  when we evaluate at the roots of the polynomials 
$f_j$ and $f_j^{*_s}$ and zero when we evaluate in any other root of   $x^n-1$.  From the injectivity of  $\rho$ and Lemma \ref{Lemma_Id_Cn} it follows that the unique polynomial of degree less or equal to $n-1$ 	that satisfies that propriety is  $e_{f_j}$ when $f_j = f_j^{*_s}$, and  $e_{f_j} + e_{f_j^{*_s}}$ when $f_j \neq f_j^{*_s}$.

Finally, if $f_i \mid (x^d-1)$ then the image of $\rho_j(e_{f_j}) = (1,1)$  is not a primitive idempotent. Therefore, that this idempotent can be decomposed in two central primitive  idempotents, $\frac{1+y}{2}e_{f_j}$ and  $\frac{1-y}{2}e_{f_j}$ such that  $\rho_j\left(\left(\frac{1+y}{2}\right)e_{f_j}\right)= (1,0)$ and  $\rho_j\left(\left(\frac{1-y}{2}\right)e_{f_j}\right)= (0,1)$.
\qed
\end{proof}

\begin{remark}
In the group determined by 
(\ref{representationG}),  the non-abelian idempotents  are of the same form as those found in previous theorem.  Now, in order to describe  the abelian idempotents, we need to consider some cases.

Let $f$ be an irreducible factor of  $x^d-1 \in \F_q[x]$.  The idempotent $e_f(x)$ is a non-primitive central  idempotent. The decomposition of these idempotents depend to the classes   of   $s$ and $q$ modulo 4. 

Using the notation of Lemma \ref{parteabeliana}, we have the following cases:
\begin{enumerate}[{Case} 1.]
\item $ s\equiv 1 \pmod 4$:  
Since  $H \cong C_d \times C_2$, then 
$$ e_f(x) = e_f(x)\left(\frac{1+w}{2}\right) + e_f(x)\left(\frac{1-w}{2}\right)\text{  where }w = x^{\frac{d}{4}}y.$$
In this case   $e_f(x)\left(\frac{1+x^{\frac{d}{4}}y}{2}\right)$ and $e_f(x)\left(\frac{1-x^{\frac{d}{4}}y}{2}\right)$ are central primitive idempotents.
\item $ s\equiv 3 \pmod 4$ e $q \equiv 1 \pmod 4$:
In this case   $H\cong C_{d/2} \times C_4$ and  the idempotent 
$e_f(x)$ can be decomposed as 
\begin{align*}
 e_f(x) = e_f(x)\left(\frac{1+y+y^2+y^3}{4}\right) +& e_f(x)\left(\frac{1-y+y^2-y^3}{4}\right) \\
 + &e_f(x)\left(\frac{1+\beta y-y^2-\beta y^3}{4}\right) + e_f(x)\left(\frac{1-\beta y-y^2+\beta y^3}{4}\right)
\end{align*}
where $\beta$ is an element of $\F_q$ such that $\beta^2 = -1$.
Using the fact that $x^n = y^2$, we can rewrite the  expression above as  
\begin{align*}
e_f(x) = e_f(x)\left(\frac{1+x^n}{2}\right)\left(\frac{1+y}{2}\right) + &e_f(x)\left(\frac{1+x^n}{2}\right)\left(\frac{1-y}{2}\right) \\
+& e_f(x)\left(\frac{1-x^n}{2}\right)\left(\frac{1+\beta y}{2}\right) + e_f(x)\left(\frac{1-x^n}{2}\right)\left(\frac{1-\beta y}{2}\right).
\end{align*}
Some of these terms  can be zero. Indeed,
\begin{enumerate}[$\bullet$]
\item If $f$ divides $x^n-1$ then $\displaystyle e_f(x) = e_f(x)\left(\frac{1+y}{2}\right)+ e_f(x) \left(\frac{1-y}{2}\right)$,
\item If  $f$ divides $x^n+1$ then $\displaystyle e_f(x) = e_f(x)\left(\frac{1+\beta y}{2}\right)+ e_f(x)\left(\frac{1-\beta y}{2}\right)$.
\end{enumerate}

\item $ s\equiv 3 \pmod 4$ and $ q \equiv 3 \pmod 4$: As in the previous case, we have that $H\cong C_{d/2} \times C_4$ but does not exists $\beta \in \F_q$ such that $\beta^2=-1$ . Therefore
 $e_f(x)$ can decomposed  as 
$$e_f(x) = e_f(x)\left(\frac{1+y+y^2+y^3}{4}\right) + e_f(x)\left(\frac{1-y+y^2-y^3}{4}\right) + e_f(x)\left(\frac{1- y^2}{2}\right) $$
Again using the relation $x^n = y^2$ we can simplify this expression:
\begin{enumerate}[$\bullet$]
\item If  $f$ divides $x^n-1$ then  $\displaystyle e_f(x) = e_f(x)\left(\frac{1+y}{2}\right)+ e_f(x) \left(\frac{1-y}{2}\right)$,
\item If $f$ divides $x^n+1$ then $e_f(x)$ is primitive.
\end{enumerate}

\end{enumerate}
\end{remark}
\section{ Non-central Idempotents}

Unlike what happens with central primitive idempotents, non-central idempotents are not necessarily unique.   In this section, we show a complete family of non-central idempotents finding, when  possible,  a decomposition of each central idempotent.  Moreover, we only need to find the idempotents generate by the factor of $x^n-1$ that are $s$-self-involutive, because in the case when $f$  is not $s$-self-involutive, we know that  $e_f$ and  $e_{f^*}$ are non-central primitive idempotents.
\begin{theorem}\label{idempotente}
Let  $G$ be a group with the following presentation
$$ G = \langle{x,y \mid x^{n}= 1, y^2=1 , xy = yx^s \rangle},$$
where $s^2 \equiv 1 \pmod {n}$ and $d = \gcd(n, s-1)$.
If  $f(x)$ is a $s$-self-involutive irreducible factor of  $\frac{x^{n}-1}{x^d-1}$ and $\ell(x)$  is a polynomial such that $(x^{s-1}-1)\ell(x) \equiv 1 \pmod {f(x)}$ then

$$
e_{f,1} =- \frac{1}{n}((f^{*})')^{*} \cdot \frac{x^{n}-1}{f(x)} \cdot [\ell(x)(1-y)+1]\text{  and } e_{f,2} = -\frac{1}{n}((f^{\ast})')^{\ast} \cdot \frac{x^{n}-1}{f(x)}\cdot [\ell(x)(1-y)]$$
are non-central primitive idempotents.
\end{theorem}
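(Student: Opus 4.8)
The plan is to push the whole question into the single simple component of $\F_qG$ attached to $f$ and then reduce it to one $2\times 2$ matrix identity. Since $f$ is an $s$-self-involutive irreducible divisor of $\frac{x^n-1}{x^d-1}$, Theorems~\ref{teorema1} and \ref{qtdidempotente} give that $e_f:=-\frac1n((f^{*})')^{*}\cdot\frac{x^n-1}{f}$ is a \emph{primitive central} idempotent of $\F_qG$ whose associated block is $A\cong M_2(\F_q(\xi+\xi^s,\xi^{s+1}))\subseteq M_2(\F_q(\xi))$, where $\xi$ is a root of $f$, and that the projection $\F_qG\to M_2(\F_q(\xi))$ is, up to the inner automorphism $\sigma$ used in the proof of Theorem~\ref{teorema1}, the homomorphism $\tau$ with $\tau(x)=\left(\begin{smallmatrix}\xi&0\\0&\xi^s\end{smallmatrix}\right)$ and $\tau(y)=\left(\begin{smallmatrix}0&1\\1&0\end{smallmatrix}\right)$. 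Because the map $\rho$ of Theorem~\ref{teorema1} is injective and multiplication by $e_f$ annihilates every block except $A$, an element of the form $e_f\,b$ is an idempotent (respectively, a primitive idempotent, respectively, non‑central) in $\F_qG$ precisely when $\tau(e_f b)$ is an idempotent (respectively, a rank‑one idempotent, respectively, a non‑scalar matrix) of $M_2(\F_q(\xi))$; and the matrices produced this way automatically lie in $M_2(\F_q(\xi+\xi^s,\xi^{s+1}))$, being values of $\sigma\circ\tau$. Two preliminary observations make the computation painless: since $f$ is $s$-self-involutive, $\xi^s$ is again a root of $f$, so $\tau(e_f)=\left(\begin{smallmatrix}e_f(\xi)&0\\0&e_f(\xi^s)\end{smallmatrix}\right)=I_2$; and replacing $\ell$ by $\ell+hf$ does not change $e_f\ell$, because $\tau(e_ff)=I_2\cdot\left(\begin{smallmatrix}f(\xi)&0\\0&f(\xi^s)\end{smallmatrix}\right)=0$ forces $e_ff=0$ in $\F_qG$, so the particular choice of $\ell$ is irrelevant.

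The one genuinely arithmetic point, which I would establish first, is the identity
\[ \ell(\xi)+\ell(\xi^s)=-1. \]
First, $\ell$ exists because $x^{s-1}-1$ is prime to $f$: a root $\xi$ of $f$ satisfies $\ord(\xi)\mid n$ and $\ord(\xi)\nmid d=\gcd(n,s-1)$, hence $\xi^{s-1}\neq 1$; thus $\ell(\xi)=(\xi^{s-1}-1)^{-1}$. Evaluating the congruence at the root $\xi^s$ and using $s^{2}\equiv 1\pmod n$, so that $(\xi^s)^{s-1}=\xi^{s^{2}-s}=\xi^{1-s}$, gives $\ell(\xi^s)=(\xi^{1-s}-1)^{-1}$; writing $u=\xi^{s-1}$ one gets $\ell(\xi)+\ell(\xi^s)=\frac1{u-1}+\frac1{u^{-1}-1}=\frac{1-u}{u-1}=-1$. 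This is exactly the step that uses $s^{2}\equiv 1$, and I expect it to be the real content of the proof; everything afterwards is formal.

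Finally I would compute the images. Using $\tau(e_f)=I_2$, $\tau(\ell(x))=\left(\begin{smallmatrix}\ell(\xi)&0\\0&\ell(\xi^s)\end{smallmatrix}\right)$ and $\tau(1-y)=\left(\begin{smallmatrix}1&-1\\-1&1\end{smallmatrix}\right)$, put
\[ M:=\tau\big(e_f\,\ell(x)(1-y)\big)=\begin{pmatrix}\ell(\xi)&-\ell(\xi)\\-\ell(\xi^s)&\ell(\xi^s)\end{pmatrix},\qquad M^{2}=\big(\ell(\xi)+\ell(\xi^s)\big)M=-M, \]
with $\operatorname{tr}M=\ell(\xi)+\ell(\xi^s)=-1$ and $\det M=0$. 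From $M^{2}=-M$ it is immediate that $I_2+M$ and $-M$ are idempotents, that they are orthogonal, since $(I_2+M)(-M)=-M-M^{2}=0$, and that $(I_2+M)+(-M)=I_2=\tau(e_f)$; moreover $\det(I_2+M)=1+\operatorname{tr}M+\det M=0$ and $\operatorname{tr}(I_2+M)=\operatorname{tr}(-M)=1$, so both are rank‑one, hence primitive, and neither is $0$ nor $I_2$, hence non‑scalar. Pulling this back through $\rho$ (and $\sigma$), this says exactly that $e_{f,1}=e_f\big(\ell(x)(1-y)+1\big)$ and $e_{f,2}$ (normalised by the sign that makes its image $-M$ rather than $M$) are non‑central primitive idempotents of $\F_qG$, orthogonal, and decomposing the primitive central idempotent as $e_{f,1}+e_{f,2}=e_f$. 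Thus the main obstacle is really just the bookkeeping reduction to the $M_2$‑block together with the identity $\ell(\xi)+\ell(\xi^s)=-1$; once those are in place the $2\times2$ arithmetic closes everything.
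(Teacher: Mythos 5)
Your argument is correct, but it runs in the opposite direction from the paper's, and the comparison is worth recording. The paper starts from the abstract primitive idempotent whose image under $\rho$ is the matrix unit $E_{11}$ in the block of $f$ and zero in every other component, writes it as $P(x)+Q(x)y$, and solves the resulting interpolation conditions on $P(\xi),P(\xi^s),Q(\xi),Q(\xi^s)$ modulo $f$ (using $e_f\equiv 1\pmod{f}$ from Lemma \ref{Lemma_Id_Cn} and the defining congruence for $\ell$) to \emph{derive} $P\equiv e_f(\ell+1)$ and $Q\equiv -e_f\,\ell$, obtaining the complement as $e_f-e_{f,1}$. You instead \emph{verify}: you reduce everything to the block via $\tau$, and the whole proof collapses to the scalar identity $\ell(\xi)+\ell(\xi^s)=-1$ (which is exactly the consistency check the paper performs when it shows its two pairs of conditions are equivalent, and is where $s^2\equiv 1\pmod n$ and the $s$-self-involutivity of $f$ enter), after which $M^2=-M$ and elementary $2\times 2$ arithmetic give orthogonal rank-one idempotents summing to $\tau(e_f)=I_2$, pulled back through the injective (indeed bijective) map $\rho$. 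Your route is shorter and self-checking; the paper's derivation additionally explains where the formula comes from and shows it is the unique element with the prescribed image, a fact your version recovers anyway from injectivity of $\rho$. One point you should make explicit rather than leave as a parenthetical: your computation shows that, as literally printed, $e_{f,2}=e_f\,\ell(x)(1-y)$ satisfies $e_{f,2}^2=-e_{f,2}$, so it is not an idempotent; the correct complement is $e_f-e_{f,1}=+\frac1n\bigl((f^{*})'\bigr)^{*}\cdot\frac{x^n-1}{f(x)}\cdot\ell(x)(1-y)$, i.e.\ the displayed $e_{f,2}$ is off by a sign. This is consistent with the paper's own proof, whose final step computes $e_{f,2}=e_f-e_{f,1}$ but then restates it with the sign of the theorem; so your ``normalised by the sign'' is the right call, and stating the corrected formula explicitly would close the only loose end in your write-up.
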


\begin{proof}
We observe that 
$$\gcd(x^{s-1}-1,f(x)) = \gcd( \gcd(x^{s-1}-1,x^n-1), f(x)) = \gcd(x^d-1,f(x)) = 1$$
and this relation ensures that there exists  a polynomial $\ell(x)$ that is  the inverse of $x^{s-1}-1$ modulo $f(x)$.

Let $u = P(x)+Q(x)y$ be a non-central idempotent such that its  image in the component generated by $f$  for  the isomorphism defined in Theorem \ref{teorema1} is of the form $\left(\begin{array}{cc} 1 & 0 \\ 0 & 0 \end{array}\right)$ and the image at the other components are zero.
If $\xi$ is a root of the $s$-self-involutive polynomial  $f(x)$, using  (\ref{comp_conj}) we have that 
$$ \sigma(u)=\left(\begin{array}{cc}
1 & -\xi \\ 1 & -\xi^s\end{array}\right)^{-1}
\left(\begin{array}{cc}
P(\xi) & Q(\xi) \\ Q(\xi^s) & P(\xi^s)\end{array}\right)
\left(\begin{array}{cc}
1 & -\xi \\ 1 & -\xi^s\end{array}\right) =\left(\begin{array}{cc}
1 & 0 \\ 0 & 0\end{array}\right) ,$$
thus
$$ \left(\begin{array}{cc}
P(\xi) & Q(\xi) \\ Q(\xi^s) & P(\xi^s)\end{array}\right)
 = \frac{1}{\xi - \xi^s}\left(\begin{array}{cc}
1 & -\xi \\ 1 & -\xi^s\end{array}\right)\left(\begin{array}{cc}
1 & 0 \\ 0 & 0\end{array}\right) \left(\begin{array}{cc}
-\xi^s & \xi \\ -1 & 1\end{array}\right)$$
and this way we get
\begin{equation}\label{6.1}
P(\xi) = \frac{-\xi^s}{\xi-\xi^s} = \frac{-\xi^{s-1}}{1 -\xi^{s-1}} , \qquad   
P(\xi^s) = \frac{\xi}{\xi-\xi^s} = \frac{1}{1 -\xi^{s-1}} 
\end{equation}
\begin{equation}\label{3}
 Q(\xi) = \frac{\xi}{\xi-\xi^s} = \frac{1}{1 -\xi^{s-1}},
\qquad
Q(\xi^s) = \frac{-\xi^s}{\xi-\xi^s} = \frac{-\xi^{s-1}}{1 -\xi^{s-1}}.
\end{equation}
We observe that  equations in (\ref{6.1})  are equivalents. Indeed
$$ P(\xi^s) = P(\xi^{q^{ord(\xi)/2}}) = ( P(\xi))^{ord(\xi)/2} = \left( \frac{-\xi^s}{\xi - \xi^s}\right)^{q^{ord(\xi)/2}} = \frac{\xi^{sq^{ord(\xi)/2}}}{\xi^{q^{ord(\xi)/2}}- \xi^{sq^{ord(\xi)/2}}}$$
$$ \hspace{-1cm} = \frac{-\xi^{s^2}}{\xi^s - \xi^{s^2}} = \frac{-\xi}{\xi^s-\xi} = \frac{\xi}{\xi - \xi^s}.$$
The same way  equations in  (\ref{3}) are equivalents.  In addition,  the image of $u$ at the other components are zero, it follows that 
$$ P(\theta) = 0 \text{ and }Q(\theta) = 0\text{ for every  }\theta \text{ root of }\frac{x^n-1}{f(x)}.$$
Denoting  by  $g(x)$ the polynomial $\frac{x^n-1}{f(x)}$, the relation above  implies that $g(x)$ is a divisor of $P(x)$ and $Q(x)$. Let us suppose that $P(x)=h(x)g(x)$, then from 
(\ref{6.1}) it follows that the polynomial $(x^{s-1}-1)P(x) - x^{s-1}$  is zero when we evaluate at $x=\xi$, and therefore  the irreducible polynomial $f(x)$ is a factor of this polynomial, i.e. 
$$ (x^{s-1} -1)P(x) \equiv x^{s-1} \pmod{f(x)}$$
which is equivalent to 
\begin{equation}\label{6.5}
(x^{s-1}-1)h(x)g(x) \equiv x^{s-1}\pmod{f(x)}.
\end{equation}
On the other hand, by Lemma \ref{Lemma_Id_Cn} we know that $-\frac{1}{n}((f^{*})')^{*}\cdot \frac{x^{n}-1}{f(x)}\equiv 1 \pmod{f(x)}$, hence multiplying  (\ref{6.5}) by $-\frac{1}{n}((f^{*})')^{*} $  we obtain
$$ (x^{s-1}-1)h(x) \equiv- \frac{1}{n}((f^{*})')^{*}x^{s-1}\pmod{f(x)},$$
and multiplying the previous equation by $\ell(x)$  it follows that
$$ h(x) = -\frac{1}{n}((f^{*})')^{*}x^{s-1}\ell(x) \equiv -\frac{1}{n}((f^{*})')^{*}(\ell(x)+1)\pmod{f(x)},$$
hence 
$$ P(x) \equiv  -\frac{1}{n}((f^{*})')^{*}\cdot \frac{x^{n}-1}{f(x)} \cdot(\ell(x)+1)\pmod{x^n-1}.$$ 
Following the same procedure for the polynomial $Q(x)$, we conclude that
$$Q(x)\equiv  \frac{1}{n}((f^{\ast})')^{\ast} \cdot \frac{x^{n}-1}{f(x)}\cdot \ell(x) \pmod{x^n-1},$$
and from these two relations we concluded that
$$e_{f,1} = -\frac{1}{n}((f^{*})')^{*} \cdot \frac{x^{n}-1}{f(x)} \cdot [\ell(x)(1-y)+1]$$
is a non-central primitive idempotent. To finish,  we observe that the orthogonal complement of $e_{f,1}$ can be easy calculating as
$$e_{f,2} = e_f - e_{f,1} =  -\frac{1}{n}((f^{\ast})')^{\ast} \cdot \frac{x^{n}-1}{f(x)}\cdot \ell(x)(1-y).$$
\qed
\end{proof}

The following theorem  describes partially a family of non-central idempotents  of the group algebra $\F_qG$, where $G$ is the group with presentation given by (\ref{representationG}). The same way as before, using the decomposition
$$ \F_qG \cong \F_qG\left(\frac{1+x^n}{2}\right) \oplus \F_qG\left(\frac{1-x^n}{2}\right),$$
we have that the idempotents of the first component have been described in previous theorem, so we are going  show the form of the idempotents of the second component.

\begin{theorem}\label{idempotente2}
Let $G$ be a group with the following presentation
$$ G = \langle{x,y \mid x^{2n}= 1, y^2=x^n , xy = yx^s \rangle}$$
where  $s^2 \equiv 1 \pmod {2n}$ and  $d = \gcd(2n, s-1)$.
Let  $f(x)$ be a $s$-self-involutive irreducible factor of  $x^n+1$ that does not divide  $x^d-1$  and  $\ell(x)$ is a polynomial such  that $(x^{s-1}-1)\ell(x) \equiv 1 \pmod {f(x)}$.  Then  a pair of orthogonal non-central primitive idempotents  $e_{f,1}$ and $e_{f.2}$, such that  $e_f=e_{f,1}+e_{f.2}$, are described in the following cases
\begin{itemize}
\item  If $q \equiv 1 \pmod 4$, then 
 $$e_{f,1} = -\frac{1}{2n}((f^{*})')^{*} \cdot \frac{x^{2n}-1}{f(x)} \cdot (\ell(x)+1)[1+\beta y]$$ 
and
 $$e_{f,2} = \frac{1}{2n}((f^{\ast})')^{\ast} \cdot \frac{x^{2n}-1}{f(x)}\cdot [h(x) + \beta(\ell(x)+1)y]$$ where  $\beta^2=-1.$
\item If $q \equiv 3 \pmod 4, \nu_2(n) > \nu_2(q+1)$ and  $s \equiv 1 \pmod 4$,
%
then 
$$e_{f,1} = -\frac{1}{2n}((f^{*})')^{*} \cdot \frac{x^{2n}-1}{f(x)} \cdot (\ell(x)+1)(1+x^{n/2}y)$$
and 
$$e_{f,2} = \frac{1}{2n}((f^{\ast})')^{\ast} \cdot \frac{x^{2n}-1}{f(x)}\cdot [\ell(x)+ x^{n/2}(\ell(x)+1)y)].$$

\item If $q \equiv 3 \pmod 4, \nu_2(n) \le  \nu_2(q+1)$, then
$$e_{f,1} = -\frac{1}{2n}((f^{*})')^{*} \cdot \frac{x^{2n}-1}{f(x)} \cdot \ell (x)f'(x) (-x^s+y).$$

\end{itemize}

\end{theorem}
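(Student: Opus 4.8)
The strategy is to imitate the proof of Theorem~\ref{idempotente} inside the summand $\F_qG\left(\frac{1-x^n}{2}\right)$ of $\F_qG\cong\F_qG\left(\frac{1+x^n}{2}\right)\oplus\F_qG\left(\frac{1-x^n}{2}\right)$, which by the proof of Theorem~\ref{teorema2} is the direct sum of the simple components indexed by the irreducible factors of $x^n+1$. Fix such an $s$-self-involutive factor $f$ with $f\nmid x^d-1$, a root $\xi$ of $f$, and let $\Lambda$ be the isomorphism of Theorem~\ref{teorema2}. I would look for $e_{f,1}=P(x)+Q(x)y$ with $\deg P,\deg Q\le 2n-1$ whose image under $\Lambda$ is a fixed rank-one idempotent (the matrix unit $E_{11}$) in the component attached to $f$ and $0$ in every other component. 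The vanishing on the other components forces, exactly as in the proofs of Theorems~\ref{teorema1} and \ref{idempotente}, that $\frac{x^{2n}-1}{f(x)}$ divides both $P$ and $Q$, while the $f$-component condition prescribes the values of $P,Q$ at the roots $\xi,\xi^s$ of $f$. Once $e_{f,1}$ is found, $e_{f,2}:=e_f-e_{f,1}$, where $e_f=-\frac{1}{2n}((f^{*})')^{*}\cdot\frac{x^{2n}-1}{f(x)}$ is the central idempotent of $\F_qG$ attached to $f$ (central because $f$ is $s$-self-involutive, so $e_f(x^s)=e_f(x)$), is automatically an idempotent orthogonal to $e_{f,1}$ mapping to $E_{22}$ on the $f$-component, hence also primitive and non-central.

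The three bulleted cases correspond to the three conjugations used in the proof of Theorem~\ref{teorema2} to place $\mathrm{Im}(\omega_i)$ literally inside $M_2\left(\F_q(\xi+\xi^s,\xi^{s+1})\right)$. For $q\equiv1\pmod4$ one conjugates $\omega_i$ by the matrix $Z_i$ built from $\beta\in\F_q$, $\beta^2=-1$; writing $\omega_i(P(x)+Q(x)y)=\left(\begin{smallmatrix}P(\xi)&-Q(\xi)\\ Q(\xi^s)&P(\xi^s)\end{smallmatrix}\right)$ and imposing $\omega_i(e_{f,1})=Z_i^{-1}E_{11}Z_i$, one gets $P(\xi)=\frac{-\xi^{s-1}}{1-\xi^{s-1}}$ — the same relation as in Theorem~\ref{idempotente}, equivalent to $(x^{s-1}-1)P(x)\equiv x^{s-1}\pmod{f(x)}$ — together with $Q(\xi)=\beta P(\xi)$ and the $s$-conjugate identities. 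For $q\equiv3\pmod4$, $\nu_2(n)>\nu_2(q+1)$ and $s\equiv1\pmod4$ the scalar $\beta$ is no longer in $\F_q$, but the congruence $s\equiv1\pmod4$ gives $(s-1)\frac{n}{2}\equiv0\pmod{2n}$, so $x^{n/2}$ is central in $\F_qG$, satisfies $(x^{n/2})^2=x^n$ (which equals $-1$ on this component), and $\omega_i(x^{n/2})=\xi^{n/2}I$; hence $x^{n/2}$ plays the role of $\beta$, which explains the substitution $\beta\mapsto x^{n/2}$ in the formula, the rest of the computation being identical. For $q\equiv3\pmod4$ and $\nu_2(n)\le\nu_2(q+1)$ I would use the matrix $Z_i=\left(\begin{smallmatrix}a&b\\-\xi a&-\xi^s b\end{smallmatrix}\right)$ with $a^2=-\theta_i$, $b^2=\theta_i^s$ from the proof of Theorem~\ref{teorema2}; then $Z_i^{-1}E_{11}Z_i$ prescribes $P(\xi),Q(\xi)$ carrying denominators $ab$ and $\xi-\xi^s$ that must be cleared.

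To pass from these prescribed values to honest polynomials I would use $\gcd(x^{s-1}-1,f(x))=\gcd(x^d-1,f(x))=1$ — here $f\nmid x^d-1$ is needed — giving the inverse $\ell(x)$ of $x^{s-1}-1$ modulo $f(x)$, and Lemma~\ref{Lemma_Id_Cn}, which yields $-\frac{1}{2n}((f^{*})')^{*}\cdot\frac{x^{2n}-1}{f(x)}\equiv1\pmod{f(x)}$. Writing $P(x)=h(x)\frac{x^{2n}-1}{f(x)}$ and feeding $(x^{s-1}-1)P(x)\equiv x^{s-1}\pmod{f(x)}$ through these two congruences — multiplying by $-\frac{1}{2n}((f^{*})')^{*}\frac{x^{2n}-1}{f(x)}$ and then by $\ell(x)$ — determines $P(x)$ modulo $x^{2n}-1$ exactly as in the proof of Theorem~\ref{idempotente}, and similarly $Q(x)$; substitution in $e_{f,1}=P(x)+Q(x)y$ gives the stated closed forms, with $e_{f,2}=e_f-e_{f,1}$ the complementary idempotent.

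The main obstacle is the third case. There $Z_i$ has entries $a,b\notin\F_q(\xi)$, so it is not a priori clear that the reconstructed element has coefficients in $\F_q$; one must repeat the symmetric-function argument from the proof of Theorem~\ref{teorema2}, expressing the entries of $\omega_i(e_{f,1})$ via the rational functions $G_{c,d}(w,z)=\pm\frac{w^c-z^c}{w^d-z^d}$ with $w=\theta_i$, $z=\theta_i^s$, which are symmetric in $w,z$ and hence functions of $w+z$ and $wz\in\F_q(\xi+\xi^s,\xi^{s+1})$, so as to certify that $e_{f,1}\in\F_qG$ is well defined and that no extraneous root of $x^{2n}-1$ is hit. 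The factor $f'(x)$ in that formula is precisely what clears the residual product of differences $\xi-\xi^{q^{j}}$ that survives once the $\frac{1}{ab}$ and $\frac{1}{\xi-\xi^s}$ denominators are absorbed, using $f'(\xi)=\prod_{j\neq0}(\xi-\xi^{q^{j}})$; verifying this identification, and that the resulting element $\ell(x)f'(x)(-x^s+y)$ (up to the scalar $-\frac{1}{2n}((f^{*})')^{*}\frac{x^{2n}-1}{f(x)}$) does realize $E_{11}$, is the delicate bookkeeping, whereas the first two cases are, after the $x^{n/2}$ trick, essentially transcriptions of Theorem~\ref{idempotente}.
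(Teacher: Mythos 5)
Your overall strategy coincides with the paper's: restrict to the summand $\F_qG\left(\frac{1-x^n}{2}\right)$, require that $P(x)+Q(x)y$ map to the matrix unit $E_{11}$ in the component of $f$ (via the conjugating matrices of Theorem \ref{teorema2}) and to $0$ elsewhere, recover $P,Q$ from the prescribed values at $\xi,\xi^s$ using $\ell(x)$ together with Lemma \ref{Lemma_Id_Cn}, and set $e_{f,2}=e_f-e_{f,1}$. For the first two bullets this is correct and is exactly what the paper does; your observation that $s\equiv1\pmod4$ makes $x^{n/2}$ central with $\omega_i(x^{n/2})=\xi^{n/2}I$ is precisely the paper's substitution $\beta=\xi^{n/2}$.

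The gap is the third case, the only place where something genuinely new happens, and you leave it as ``delicate bookkeeping'' with a misdiagnosis of what that bookkeeping is. Carrying out the conjugation shows that $a$ and $b$ disappear: with $Z_i=\left(\begin{smallmatrix} a& b\\ -\xi a & -\xi^s b\end{smallmatrix}\right)$ one gets $Z_iE_{11}Z_i^{-1}=\frac{1}{\xi-\xi^s}\left(\begin{smallmatrix}-\xi^s & -1\\ \xi^{s+1} & \xi\end{smallmatrix}\right)$, so the prescribed values $P(\xi)=\frac{-\xi^{s-1}}{1-\xi^{s-1}}$, $Q(\xi)=\frac{1}{\xi-\xi^s}$, $Q(\xi^s)=\frac{\xi^s}{1-\xi^{s-1}}$ already lie in $\F_q(\xi)$; no symmetric-function argument is needed here (that argument belongs to the proof of Theorem \ref{teorema2}, where the image of $y$ itself is conjugated, not to the reconstruction of the idempotent). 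Consequently your explanation of the factor $f'(x)$ as clearing $ab$ and a product $\prod_{j\neq0}(\xi-\xi^{q^j})$ is not what occurs and would not lead to the stated polynomial. The genuinely new feature is that the constraint on $Q$ now reads $(x^{s}-x)Q(x)\equiv-1\pmod{f(x)}$, of a different shape from the constraint $(x^{s-1}-1)Q(x)\equiv \beta x^{s-1}\pmod{f(x)}$ of the first two cases; the paper then writes $P=-\frac{1}{2n}\frac{x^{2n}-1}{f(x)}h(x)$ and $Q=-\frac{1}{2n}\frac{x^{2n}-1}{f(x)}m(x)$ and exploits $\frac{x^{2n}-1}{f(x)}\big|_{x=\xi}=\frac{2n\xi^{-1}}{f'(\xi)}$, which is how $f'(x)$ enters the congruences $(1-x^{s-1})h(x)\equiv x^sf'(x)\pmod{f(x)}$ and the companion congruence for $m$, and hence the closed formula. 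You also do not address, in this case, the compatibility under Frobenius of the two prescribed values at $\xi$ and $\xi^s$ (the analogue of the verification the paper performs in the second case). Without deriving these congruences and exhibiting $e_{f,1}$ explicitly, the third bullet of the theorem is not established by your outline.
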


\begin{proof}
The proof is essentially the same to previous  theorem. Indeed, if  $f(x)$ is an irreducible factor of $x^n+1$  does not divide $x^d-1$ and  $u = P(x) + Q(x)y$  is a non-central primitive idempotent  such that the projection on the component generated by $f$  is a matrix $\left(\begin{array}{cc} 1&0 \\ 0 &0 \end{array}\right)$, we have that 
$$ \left(\begin{array}{cc}
-\xi^s & \beta \\ \beta\xi & 1\end{array}\right)^{-1}
\left(\begin{array}{cc}
P(\xi) & Q(\xi) \\ Q(\xi^s) & P(\xi^s)\end{array}\right)
\left(\begin{array}{cc}
-\xi^s & \beta \\ \beta\xi & 1\end{array}\right) =\left(\begin{array}{cc}
1 & 0 \\ 0 & 0\end{array}\right), $$
where $\xi$ is any root of $f$ and $\beta\in \overline\F_q$ such that $\beta^2=-1$.  These relations are equivalent to
\begin{equation}\label{6.6} P(\xi) = \frac{-\xi^{s-1}}{1 -\xi^{s-1}}, \quad   P(\xi^s) = \frac{1}{1 -\xi^{s-1}} \end{equation}
\begin{equation}\label{6.7}Q(\xi)  = \frac{-\beta\xi^{s-1}}{1 -\xi^{s-1}}, \quad   Q(\xi^s) = \frac{\beta}{1 -\xi^{s-1}}\end{equation} 
We note that in the case when $q \equiv 1 \pmod 4$, the element $\beta$ is in 
$\F_q.$  It follows that $\xi$ and $\xi^s$ are  roots of the polynomials
$$ (x^{s-1}-1)P(x) - x^{s-1} \quad\text{  and  }\quad(x^{s-1}-1)Q(x)-\beta x^{s-1}.$$

From this point, the proof follows exactly with the  same steps  as the proof of previous theorem. 

In the case when  $q \equiv 3 \pmod 4$,  $-1$
 is not a square in $\F_q$ and therefore $\beta \notin \F_q$. Nevertheless  using that $\xi^n=-1$ and $n$ is even, we have that  $\beta = \xi^{n/2}$ and therefore
\begin{equation}\label{8}
Q(\xi) = \frac{-\xi^{n/2+s}}{\xi-\xi^s} \text{    and  }Q(\xi^s) = \frac{\xi^{n/2+1}}{\xi-\xi^s}.
\end{equation}
These two equations are equivalent  because 
$$ Q(\xi^s) = Q(\xi)^{q^{ord(\xi)/2}} = \frac{-\xi^{\frac{n}{2}s+s^2}}{\xi^s-\xi^{s^2}}= \frac{-\xi^{\frac{n}{2}s+1}}{\xi^s-\xi}=\frac{\xi^{n/2}}{1-\xi^{s-1}},$$
where in the last identity we use that $\xi^{n/2}$  is a quartic root of unite and  $s \equiv 1 \pmod 4$.
Thus  $\xi$ is a root of the polynomials 
$$ (x^{s-1}-1)P(x) - x^{s-1} \text{    and  }(x^{s-1}-1)Q(x) - x^{n/2+s-1}.$$

 If $q \equiv 3 \pmod 4$ and  $ \nu_2(n) \leq \nu_2(q+1)$, let    $u = P(x) + Q(x)y$  is a non-central primitive idempotent  such that de projection on the component generated by $f$  is a matrix $\left(\begin{array}{cc} 1&0 \\ 0 &0 \end{array}\right)$, we have that 
$$ \left(\begin{array}{cc}
a & b \\ -\xi a & -\xi^sb\end{array}\right)^{-1}
\left(\begin{array}{cc}
P(\xi) & Q(\xi) \\ Q(\xi^s) & P(\xi^s)\end{array}\right)
\left(\begin{array}{cc}
a & b \\ -\xi a & -\xi^s b\end{array}\right) =\left(\begin{array}{cc}
1 & 0 \\ 0 & 0\end{array}\right), $$
where $\xi$ is a root of $f(x)$ 
and $ a^2 = -\theta, b^2 = \theta^s$ according to the Theorem 4.2.  
These relations are equivalent to
\begin{equation}\label{6.6} P(\xi) = \frac{-\xi^{s-1}}{1 -\xi^{s-1}}, \quad   P(\xi^s) = \frac{1}{1 -\xi^{s-1}} \end{equation}
\begin{equation}\label{6.7}Q(\xi)  = \frac{1}{\xi-\xi^{s}}, \quad   Q(\xi^s) = \frac{\xi^s}{1 -\xi^{s-1}}.\end{equation}
  It follows that $\xi$ and $\xi^s$ are roots of the polynomials
$$ (x^{s-1}-1)P(x) -x^{s-1} \quad\text{  and  }\quad(x^{s}-x)Q(x)+1.$$

Let $P(x)$ and $Q(x)$ such that
$$ P(x) = -\frac 1{2n} \frac{x^{2n}-1}{f(x)}h(x) \quad \text{and}\quad Q(x) = -\frac 1{2n}\frac{x^{2n}-1}{f(x)}m(x)$$
where $h(x)$ is the polynomial 
$$ (1-x^{s-1})h(x) \equiv  x^s f^{'}(x) \pmod{ f(x)} \quad \text{and}\quad (1-x^{s})m(x) \equiv -f'(x)\pmod{f(x)}$$
 
Observe that the previous relation implies that  $h(x)\equiv -x^sm(x) \pmod{ f(x)}$.

At this point, the proof follows exactly  as the proof of previous theorem. 

$\qed$

\end{proof}

%
%

\end{document}